\DeclareMathOperator{\supp}{supp}
\let\Re\relax\DeclareMathOperator{\Re}{Re} % redeclare Re (real part of a complex number) as math operator
\newcommand{\ds}{\displaystyle}
\newtheorem{theorem}{Theorem}[section]
\newtheorem{lemma}[theorem]{Lemma}
\newtheorem{proposition}[theorem]{Proposition}
\theoremstyle{definition}
\theoremstyle{remark}
\newtheorem{remark}[theorem]{Remark}
\numberwithin{equation}{section}
\newcommand{\Addresses}{{% additional braces for segregating \footnotesize
		\bigskip
		\footnotesize
		
		\noindent\textbf{Arno B.J. Kuijlaars}, \textsc{Department of Mathematics, Katholieke Universiteit Leuven, Celestijnenlaan 200B box 2400, 3001 Leuven, Belgium.}\par\nopagebreak
		\noindent\textit{E-mail address}: \texttt{arno.kuijlaars@kuleuven.be}
				
}}
\begin{document}
\title{Extremal polynomials on the $n$-grid} 
\author{Arno B.J. Kuijlaars}
\date{\small Journal of Approximation Theory 288 (2023), 105875} 
\maketitle

\begin{abstract}
The $n$-grid $E_n$ consists of $n$ equally spaced points in $[-1,1]$ including the endpoints $\pm 1$.
The extremal polynomial $p_n^*$  is the polynomial  
that maximizes the uniform norm $\| p \|_{[-1,1]}$
among polynomials $p$ of degree $\leq \alpha n$ that are bounded by one in absolute value on $E_n$.	
For every $\alpha \in (0,1)$, we determine the limit of $\frac{1}{n} \log \| p_n^*\|_{[-1,1]}$ 
as $n \to \infty$. 
The interest in this limit comes from a connection with an impossibility theorem on
stable approximation on the $n$-grid.

\end{abstract}
\section{Statement of result}
The $n$-grid from the title refers to $n$ equally spaced points in $[-1,1]$
\begin{equation} \label{En} 
	E_n = \{ \xi_{k,n} = \tfrac{2k-n-1}{n-1} \mid k = 1, \ldots, n \},
\end{equation}  
ranging from $-1$ to $1$. Let $0 < \alpha < 1$.
This paper is about the determination of the limit of the expression
\begin{equation} \label{ratio} 
	\frac{1}{n} \log \sup_{\deg p \leq \alpha n} \frac{\| p \|_{[-1,1]}}{\| p\|_{E_n}} \end{equation}
as $n \to \infty$, where the norms are uniform norms over the indicated
sets, and the supremum is over univariate polynomials $p$
of degrees at most $\alpha n$ that do not vanish identically. 
The  result was already announced in the paper \cite{PTK11} from 2011. 
Renewed interest in it is due to  \cite{HT22}. 

\begin{theorem} \label{theorem11} For every $\alpha \in (0,1)$ the limit
\begin{equation} \label{limitresult}
    \lim_{n \to \infty} \frac{1}{n}  \log \sup_{\deg p \leq \alpha n} \frac{\| p \|_{[-1,1]}}{\| p\|_{E_n}} = C(\alpha) \end{equation}
exists and is equal to
\begin{equation} \label{Calpha}  
	C(\alpha)  = \frac{(1+\alpha) \log(1+\alpha) + (1-\alpha) \log(1-\alpha)}{2}.
    \end{equation}
\end{theorem}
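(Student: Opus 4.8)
Since $E_n\subset[-1,1]$ the supremum is $\ge 1$, and I would study $\tau_n:=\sup\{\|p\|_{[-1,1]}:\deg p\le\lfloor\alpha n\rfloor,\ \|p\|_{E_n}\le1\}$ through the logarithmic potentials of the zero-counting measures of the competitors. Writing $p=c\prod_j(x-z_j)$, $m=\deg p$, $\sigma=\sum_j\delta_{z_j}$, one has $\tfrac1n\log|p(x)|=\tfrac1n\log|c|-\tfrac mn U^{\sigma/m}(x)$ with $U^\mu(x)=-\int\log|x-t|\,d\mu(t)$. The mechanism is that $E_n$ equidistributes with respect to $\lambda=\tfrac12\,dx$ on $[-1,1]$, so the exponential size of $|p|$ on $[-1,1]$ versus on $E_n$ is controlled by the limiting zero distribution, subject to the essential constraint that it be $\le\lambda$ (zeros land at grid points at most one per point). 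I would therefore bring in the constrained equilibrium measure $\nu_\alpha$: the logarithmic-energy minimizer among positive measures on $[-1,1]$ of mass $\alpha$ with $\nu\le\lambda$, and its Frostman constant $F_\alpha$ (so $U^{\nu_\alpha}\equiv F_\alpha$ on the unsaturated set $\{\nu_\alpha<\lambda\}$ and $U^{\nu_\alpha}\le F_\alpha$ on $\operatorname{supp}\nu_\alpha$). The statement then splits into $\lim\tfrac1n\log\tau_n=F_\alpha-\min_{[-1,1]}U^{\nu_\alpha}$ and $F_\alpha-\min_{[-1,1]}U^{\nu_\alpha}=C(\alpha)$.

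\textbf{Upper bound.} For an admissible $p$, the inequalities $|p(\xi_{k,n})|\le1$ read $\tfrac1n\log|c|\le\tfrac mn U^{\sigma/m}(\xi_{k,n})$, hence $\tfrac1n\log|c|\le\tfrac mn\min_k U^{\sigma/m}(\xi_{k,n})$, and for every $x_0\in[-1,1]$
\[
\tfrac1n\log|p(x_0)|\ \le\ \tfrac mn\Big(\min_{1\le k\le n}U^{\sigma/m}(\xi_{k,n})-U^{\sigma/m}(x_0)\Big).
\]
I would then bound the right-hand side, uniformly in $x_0$ and over all probability measures that can arise as $\sigma/m$, by $C(\alpha)+o(1)$. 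The key is that over-concentrating zeros does not help: a balayage argument reduces matters to limiting zero distributions $\le\lambda$, the bracket then being a difference of a grid-minimum and an interval-minimum of a potential; one shows this difference is maximized exactly at $\nu_\alpha/\alpha$ and equals $\big(F_\alpha-\min_{[-1,1]}U^{\nu_\alpha}\big)/\alpha$. Letting $n\to\infty$ with $m/n\to\alpha$ gives $\limsup\tfrac1n\log\tau_n\le F_\alpha-\min_{[-1,1]}U^{\nu_\alpha}$.

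\textbf{Lower bound.} I would construct a near-extremal $p_n$: monic, with zeros at \emph{all} grid points in the saturation set $\{\nu_\alpha=\lambda\}$ — which one shows has the form $[-1,-a_\alpha]\cup[a_\alpha,1]$ with $a_\alpha\in(1-\alpha,1)$ — together with a suitable subset of the remaining grid points so that $\deg p_n=\lfloor\alpha n\rfloor$ and $\tfrac1n\sigma_n\to\nu_\alpha$. Standard estimates for products $\prod(x-\xi_{k,n})$ over grid points, valid uniformly on $[-1,1]$ up to $o(n)$ in the exponent even within $O(1/n)$ of the zero set, give $\tfrac1n\log\|p_n\|_{E_n}\to-F_\alpha$ (grid points in the saturation set are zeros, and on $\{\nu_\alpha<\lambda\}$ one has $-U^{\nu_\alpha}\equiv-F_\alpha$); whereas near $\pm1$ the polynomial behaves like $\prod_{k=1}^n(x-\xi_{k,n})$ and attains at the midpoint $1-\tfrac1{n-1}$ the value $e^{-nU^{\nu_\alpha}(1)+o(n)}=e^{-n\min_{[-1,1]}U^{\nu_\alpha}+o(n)}$, since the uniform-density tail of $\nu_\alpha$ forces $U^{\nu_\alpha}$ to be minimized at $\pm1$. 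Hence $\liminf\tfrac1n\log\tau_n\ge F_\alpha-\min_{[-1,1]}U^{\nu_\alpha}$.

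\textbf{Evaluating the constant, and the main obstacle.} It remains to compute $F_\alpha-\min_{[-1,1]}U^{\nu_\alpha}$. One determines $\nu_\alpha$ explicitly: it equals $\lambda$ on $[-1,-a_\alpha]\cup[a_\alpha,1]$, on $(-a_\alpha,a_\alpha)$ its density is recovered from $U^{\nu_\alpha}\equiv F_\alpha$ by inverting a finite Hilbert transform, and $a_\alpha$ is fixed by $\nu_\alpha([-1,1])=\alpha$; evaluating $F_\alpha$ and $U^{\nu_\alpha}(1)=\min_{[-1,1]}U^{\nu_\alpha}$ then yields $F_\alpha-\min_{[-1,1]}U^{\nu_\alpha}=1-U^\lambda(\alpha)$, where $U^\lambda(x)=1-\tfrac12\big[(1+x)\log(1+x)+(1-x)\log(1-x)\big]$, i.e.\ $C(\alpha)$; the limits $\alpha\to0$ and $\alpha\to1$ reproduce $C(0)=0$, $C(1^-)=\log2$ as a check. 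The principal difficulties I anticipate are (i) making the discrete-to-continuous potential estimates uniform — controlling $\tfrac1n\log|p_n(x)|$ for $x$ within $O(1/n)$ of the zeros, and obtaining matching lower and upper estimates for $\|\cdot\|_{E_n}$ uniformly over all competitors; and (ii) in the upper bound, ruling out over-concentrated (or complex) zero configurations, i.e.\ proving that $\nu_\alpha$ genuinely solves the extremal problem, together with the explicit identification of $\nu_\alpha$ and verification of its variational inequalities — this is the technical heart of the argument.
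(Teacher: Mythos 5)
Your framework is the right one and matches the paper's: the constrained equilibrium measure $\mu_\alpha$ (your $\nu_\alpha$) with constraint $\tfrac12\,dx$, Rakhmanov's picture of the saturation region $[-1,-r]\cup[r,1]$, $r=\sqrt{1-\alpha^2}$, and the target formula $C(\alpha)=1-U^\sigma(\alpha)$ are all correct, and your lower bound via an explicit near-extremal polynomial with zeros on a sparse grid subset is a legitimate alternative to the paper's route (which instead quotes Rakhmanov's and Dragnev--Saff's asymptotics for the monic $L_\infty$-minimizers on $E_n$ and invokes the lower envelope theorem to pass from zero distributions to $\sup$-norm asymptotics). The computation $\min_{[-r,r]}U^{\mu_\alpha}-\min_{[-1,1]}U^{\mu_\alpha}=C(\alpha)$ is a genuine calculation in the paper (a principal-value integral for $\tfrac{d}{dx}U^{\mu_\alpha}$ plus an integral identity verified by differentiating in $\alpha$), but you correctly anticipate what needs to be computed.

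The genuine gap is in the upper bound, exactly where you flag ``the technical heart.'' You propose that ``a balayage argument reduces matters to limiting zero distributions $\le\lambda$,'' but balayage does not deliver this: sweeping zeros onto $[-1,1]$ produces a measure on $[-1,1]$, but nothing forces its density to stay below $\tfrac12$, and an arbitrary polynomial with $\|p\|_{E_n}\le1$ certainly may have several zeros clustered between consecutive grid points (or complex zeros, or a zero far outside $[-1,1]$). Without the constraint $\le\lambda$ on the limiting zero measure, the variational inequality that pins $\mu_\alpha$ down as the maximizer is unavailable, and the upper bound collapses. The paper closes this gap with a structural lemma (its Lemma 3.1) that you do not have: fixing the point $x_n^*$ where $\|p_n^*\|_{[-1,1]}$ is attained and normalizing $p_n^*(x_n^*)=1$, it observes that $p_n^*$ then minimizes $\|p\|_{E_n}$ subject to $p(x_n^*)=1$; passing to $q_n^*(x)=x^{\lfloor\alpha n\rfloor}p_n^*(x_n^*+1/x)$ turns this into a \emph{monic} weighted $L_\infty$-minimization on the transformed finite set $\Sigma_n$, for which standard variational perturbations of zeros show that $q_n^*$ has only simple real zeros lying strictly between consecutive nodes. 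Pulling back through the M\"obius map, $p_n^*$ has at least $\lfloor\alpha n\rfloor-1$ simple zeros in $(-1,1)$, \emph{separated by the grid points of $E_n$}, with at most one stray zero outside. This is precisely what guarantees that every weak$^*$ limit of the normalized zero counting measures lies in $\mathcal M_{\alpha,\sigma}$. You would then still need the second ingredient: that the functional $J(\mu)=\min_{\supp(\sigma-\mu)}U^\mu-\min_{[-1,1]}U^\mu$ is uniquely maximized over $\mathcal M_{\alpha,\sigma}$ by $\mu_\alpha$. The paper proves this by applying the minimum principle for superharmonic functions to $U^{\mu-\mu_\alpha}$ (superharmonic off $\supp(\sigma-\mu)$, harmonic at $\infty$ since $\mu$ and $\mu_\alpha$ have equal mass), obtaining a strict inequality whenever $\mu\ne\mu_\alpha$; your proposal asserts this maximality but offers no mechanism. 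Both the zero-separation lemma and the $J$-extremality argument are missing ideas, not merely missing details.
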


The limit $C(\alpha)$ is positive and strictly increasing with $\alpha$.
There is a nice Taylor expansion
\[  C(\alpha)  = \sum_{k=1}^{\infty} \frac{\alpha^{2k}}{2k(2k-1)}.  \] 
The odd Taylor coefficients vanish since the power series defines an odd function. 
The even Taylor coefficients are positive and therefore $C(\alpha) \geq \frac{1}{2} \alpha^2$. 
It is also worth noticing that $C(\alpha) \to \log 2$ as $\alpha \to 1-$.
For $\alpha = 1$ however, we have that the limit in \eqref{limitresult}
is $+\infty$, since for each $n$, there is a non-zero polynomial of degree $n$
that vanishes on the $n$-grid. 

\paragraph{Discussion}
The limit \eqref{limitresult} shows that a polynomial of degree $\leq \alpha n$
that is bounded on $E_n$ can be exponentially large somewhere in the interval $[-1,1]$. 
Namely, if $|p(\xi_{k,n})| \leq 1$ for each $k=1, \ldots, n$, then $|p(x)|$
at some $x \in [-1,1]$ can be as large as $e^{n (C(\alpha) +o(1))}$ as
$n \to \infty$,
and the constant $C(\alpha)$ is sharp.
The result is related to earlier work of
Coppersmith and Rivlin \cite{CR92} who showed that there
exist universal constants $C_2 > C_1 > 1$ such that
for $n$ large enough, and for every $d \leq n-1$,
\begin{equation} \label{CRresult}
	C_1^{d^2/n} \leq \sup_{\deg p \leq d} \frac{\| p \|_{[-1,1]}}{\| p\|_{E_n}} 
	\leq C_2^{d^2/n}. \end{equation}
The inequalities \eqref{CRresult} show that polynomials of degree $d \leq c \sqrt{n}$ that are bounded by one on $E_n$ are uniformly bounded on $[-1,1]$
with a constant that only depends on $c$.
However, if $d$ grows proportionally with $n$ then \eqref{CRresult}
shows that polynomials
that are bounded by one on $E_n$ may be exponentially large on $[-1,1]$, 
and this behavior is made more precise in
the limit \eqref{limitresult}.

The comparisons of the two uniform norms $\| \cdot \|_{E_n}$ and $\| \cdot \|_{[-1,1]}$ arises naturally when studying approximation or
interpolation methods for analytic functions based on function values
on the $n$-grid. There is a trade-off between convergence
and stability properties that was made precise in the
impossibility theorem of \cite[Theorem 3.1]{PTK11}. For example, exponential
convergence as $n \to \infty$ comes together with exponential instability.
The proof of the impossibility theorem in \cite{PTK11} relies on
the Coppersmith-Rivlin inequalities \eqref{CRresult}.
For recent work in this direction we refer to \cite{AS23, HT22}.

It was observed in \cite[section 4]{PTK11} that the phenomenon that
polynomials of degree $d \approx \alpha n$ 
can be much larger on $[-1,1]$ than on $E_n$ may be understood
in terms of potential theory. Here one thinks of a polynomial in terms
of its zeros. A polynomial may be small at certain gridpoints in $E_n$
by simply having a zero very close to these gridpoints.  
Since there are more gridpoints than zeros this cannot 
happen for every gridpoint. The extremal polynomial
$p_n^*$ for \eqref{ratio} will place a certain fraction of its zeros 
extremely close to gridpoints lying in a subset $S$ of $[-1,1]$ (with $S$ depending on $\alpha$).
Then $p_n^*$ is small at the gridpoints in $S$  but not necessarily in between,
and in fact it has high oscillations in $S$.  Following \cite{BKMM07, DS98} we call
$S$ the saturated region. 
The non-saturated region $[-1,1] \setminus S$ has   
fewer zeros than gridpoints. The extremal polynomial $p_n^*$ is 
not only small at the gridpoints in $[-1,1] \setminus S$ but 
it is of comparable size  over the full non-saturated region, 
see \cite{Rak07} for very precise estimates.

This phenomenon was first described by Rakhmanov \cite{Rak96} for orthogonal
polynomials on the $n$-grid, or more generally, for  polynomials that
minimize a discrete $L_p$ norm on $E_n$. These polynomials
have their zeros in $[-1,1]$ and they are separated by the gridpoints,
in the sense that in between any two distinct zeros there is at least
one gridpoint. In the limit $n \to \infty$ the zeros of the extremal polynomials
of degree $\lfloor \alpha n \rfloor$ considered in \cite{Rak96}
have a limiting distribution $\mu_{\alpha}$ (depending only on $\alpha$)
that is characterized by a constrained equilibrium problem from potential
theory. The measure $\mu_{\alpha}$ has a density with respect to
Lebesgue measure on $[-1,1]$ with $\frac{d\mu_{\alpha}}{dx} \leq \frac{1}{2}$
where $\frac{1}{2}$ is the limiting density of the gridpoints as $n \to \infty$. 
The saturated region $S$ is where the 
equality $\frac{d\mu_{\alpha}}{dx} = \frac{1}{2}$. holds. 
We give details in section \ref{section22} below.
The extremal polynomials $p_n^*$ for \eqref{ratio}
have the same limiting zero distribution $\mu_{\alpha}$ as $n \to \infty$,
as we will show in this paper.
Also in other aspects they behave similarly to the orthogonal polynomials
on the $n$-grid, and this will be the clue to the proof of Theorem \ref{theorem11}. 

\paragraph{Outline of the proof}
The extremal polynomial  for \eqref{ratio} is a polynomial $p_n^*$ 
of degree $\leq \alpha n$ such that 
\begin{equation} \label{pnstar}
	\frac{\| p_n^* \|_{[-1,1]}}{\| p_n^* \|_{E_n}}  = \sup_{\deg p \leq \alpha n} \frac{\| p \|_{[-1,1]}}{\| p\|_{E_n}}. \end{equation}
The proof of \eqref{limitresult} then naturally comes in two steps.
In the first step we prove the lower bound
\begin{equation} \label{lowerbound}
	\liminf_{n \to \infty} \frac{1}{n} \log \frac{\| p_n^* \|_{[-1,1]}}{\| p_n^* \|_{E_n}} 
	\geq C(\alpha) \end{equation}
and in the second step the corresponding upper bound
\begin{equation} \label{upperbound}
	\limsup_{n \to \infty} \frac{1}{n} \log \frac{\| p_n^* \|_{[-1,1]}}{\| p_n^* \|_{E_n}} 
	\leq C(\alpha). \end{equation}
The lower bound comes from considering  the $L_{\infty}$-extremal
polynomials $P_n^*$  on $E_n$, where $P_n^*$ is the monic polynomial 
of degree $\lfloor \alpha n \rfloor$ that minimizes the uniform norm $\| \cdot \|_{E_n}$.
Using the results from \cite{DS98, Rak96} and some additional calculations 
we prove in section \ref{section2} that
\begin{equation} \label{lowerbound2} 
	\lim_{n \to \infty} \frac{1}{n} \log \frac{\| P_n^*\|_{[-1,1]}}{\| P_n^*\|_{E_n}}
	= C(\alpha), \end{equation}
and this implies the lower bound \eqref{lowerbound}.

The upper bound \eqref{upperbound} is proved in section \ref{section3}.
It comes from a study of the zeros of the extremal
polynomials $p_n^*$ satisfying \eqref{pnstar}. We show in Lemma \ref{lemma31}
that the zeros are real and simple and at least $\lfloor \alpha n \rfloor -1$
zeros are in $[-1,1]$ where they are separated by the gridpoints. Note that
one zero could be in $\mathbb R \setminus [-1,1]$.
We then use potential theoretic arguments to show that the limiting
distribution of the zeros of $p_n^*$ is equal to the constrained
equilibrium measure $\mu_{\alpha}$. Along the way we prove that
$\mu_{\alpha}$ is the maximizer of a functional $J$ that we define in
\eqref{Jmudef} with $J(\mu_{\alpha}) = C(\alpha)$, which leads to
the upper bound \eqref{upperbound}.

We finally note that discrete orthogonal polynomials and the constrained
equilibrium problem also play a role in the analysis of iterative methods
from numerical linear algebra \cite{BK01,Kui00,Kui06},   and the asymptotic 
analysis of integrable systems \cite{DM98}, random matrices and 
random tiling models \cite{BKMM07, BL14}.

\section{Proof of the lower bound} \label{section2}

\subsection{Extremal polynomials on the $n$-grid}
As explained above, we are going to consider the monic polynomial
$P_n^*$ of degree $\lfloor \alpha n \rfloor$ such that
\begin{equation} \label{Pnextremal} 
	\| P_n^*\|_{E_n} = \min_{\scriptsize \begin{array}{c} \deg P = \lfloor \alpha n \rfloor \\
			P \text{ is monic}\end{array}} \| P\|_{E_n}. \end{equation}
We are going to show that  the limit
\eqref{lowerbound2} holds.

Rakhmanov \cite{Rak96} considered polynomials $P$ of degree $n$ that are monic (leading coefficient
equal to $1$) and that minimize either the uniform norm $\| P \|_{E_N}$,
or a discrete $p$-norm on $E_N$ among all such polynomials. The interest is in their asymptotic 
behavior as both $n, N \to \infty$ with $n/N \to c < 1$. The equispaced $n$-grid \eqref{En} is 
actually only a special case of far more general discrete sets that are considered in \cite{Rak96}.
Compared to \cite{Rak96} we change $N \mapsto n$, $n \mapsto \lfloor \alpha n \rfloor$, $c \mapsto \alpha$.

\subsection{Limiting behavior of zeros} \label{section22}

The zeros of $P_n^*$ are real and simple. They belong to the interval $(-1,1)$,
where they are separated by the nodes in $E_n$, see \cite{DS98,Rak96}.
To $P_n^*$ we associate
the normalized zero counting measure
\[ \nu_n = \frac{1}{n} \sum_{k=1}^{\lfloor \alpha n \rfloor}
	\delta_{x_{k,n}}, \]
where $x_{k,n}$ for $k=1, \ldots, \lfloor \alpha n \rfloor$, 
denote the zeros of $P_n^*$. Note that we normalize with the factor $1/n$
while the degree of $P_n^*$ is $\lfloor \alpha n \rfloor$. Thus 
$\nu_n$ is not a probability measure but rather has
a total mass $\frac{\lfloor \alpha n \rfloor }{n}$

Rakhmanov \cite[Theorem 2]{Rak96}, see also \cite[Theorem 3.3]{DS98}, 
proved that the weak$^*$ limit
\begin{equation} \label{nuPnlimit} 
		\nu_n \stackrel{\ast}{\to} \mu_{\alpha} \end{equation}
exists, where $\mu_{\alpha}$ is the measure on $[-1,1]$ with
density  \cite[Theorem 1]{Rak96}
\begin{equation} \label{mualpha} \frac{d\mu_{\alpha}}{dx}
	= \begin{cases} \ds \frac{1}{2}, & \text{ for } 
		x \in [-1,-r] \cup [r, 1], \\[10pt]
		\ds	\frac{1}{\pi} \arctan \left( \frac{\alpha}{\sqrt{r^2-x^2}}\right), 
		& \text{ for } x \in [-r,r], \end{cases} \end{equation}
where
\begin{equation} \label{ralpha} 
	r = r(\alpha) =  \sqrt{1-\alpha^2}. \end{equation}
The density on $[-r,r]$
can alternatively be written as
\begin{equation} \label{mualpha2}
	\frac{d\mu_\alpha}{dx}
	= 	\frac{1}{2} - \frac{1}{\pi} \arccos \left(\frac{\alpha}{\sqrt{1-x^2}}\right), 
	\quad \text{ for } x \in [-r,r].
\end{equation}

The measure $\mu_{\alpha}$ belongs to the class
\begin{equation} \label{Malphasigma} 
	\mathcal M_{\alpha, \sigma} :=
	\{ \mu \mid \smallint d\mu = \alpha, \, 0 \leq \mu \leq \sigma \} \end{equation}
where 
\begin{equation} \label{sigma}
	d \sigma = \frac{1}{2} \chi_{[-1,1]}(x) dx
\end{equation} denotes the 
Lebesgue measure restricted to $[-1,1]$ with normalization such that
$\int d\sigma =1$. The upper constraint
$\mu_{\alpha} \leq \sigma$ comes from the fact that the zeros of $P_n^*$ are separated by the nodes $\xi_{k,n}$ in the equispaced grid $E_n$.  

Rakhmanov also characterized $\mu_{\alpha}$ in terms of notions from 
logarithmic potential theory \cite{ST97}. Let 
\begin{equation} \label{Imu} 
	I(\mu) = \iint \log \frac{1}{|x-y|} d\mu(x) d\mu(y) \end{equation}
and
\begin{equation} \label{Umu}
	U^{\mu}(x) = \int \log \frac{1}{|x-y|} d\mu(y) \end{equation}
denote the logarithmic energy and the logarithmic potential of a 
measure $\mu$, respectively.  Then 
\begin{equation} \label{Imualpha}
	I(\mu_{\alpha}) = \min_{\mu \in \mathcal M_{\alpha, \sigma}} I(\mu) 
\end{equation}
and $\mu_{\alpha}$ is the unique minimizer within the class \eqref{Malphasigma}.
Furthermore, there is a constant $\ell_{\alpha}$ such that
\begin{equation} \label{Umualpha}
	U^{\mu_{\alpha}}(x) 
	\begin{cases} = \ell_{\alpha}, & \text{ for } x \in \supp(\sigma-\mu_{\alpha}), \\
		\leq \ell_{\alpha}, & \text{ for } x \in [-1,1], 
	\end{cases}
\end{equation}
and $\mu_{\alpha}$ is the only measure $\mu$ in 
$\mathcal M_{\alpha, \sigma}$ such that $U^{\mu}(x) = \ell$ is constant
on $\supp(\sigma - \mu)$ and $U^{\mu}(x) \leq \ell$ on $[-1,1]$
for a certain constant $\ell$.

Because of the upper constraint $\mu \leq \sigma$, the
measure $\mu_{\alpha}$ is called a constrained equilibrium
measure, see \cite{BKMM07,DS98}. The saturated region $S$ is 
where $ \frac{d\mu_{\alpha}}{dx} = \frac{d\sigma}{dx} = \frac{1}{2}$
and in view of \eqref{mualpha} we have $S = [-1,-r] \cup [r,1]$.
The constraint is not active in the region where $ \frac{d\mu_{\alpha}}{dx} < \frac{1}{2}$.
This is the non-saturated region and its closure is $\supp(\sigma-\mu_{\alpha}) = [-r,r]$.

\subsection{Two lemmas}

The connection between potential theory and the asymptotics theory of 
polynomials is well-known. If $P$ is a monic polynomial and 
\[ \nu = \frac{1}{n} \sum_{x : P(x) = 0} \delta_x \]
is its normalized zero counting measure (each zero is included in the sum according to 
its multiplicity), then 
\[ \frac{1}{n} \log |P(x)| = - U^{\nu}(x).  \]
If $(P_n)_n$ is a sequence of monic polynomials, and $(\nu_n)_n$
is the corresponding sequence of normalized zero counting measures then
the convergence of $(\nu_n)_n$ contains information on the $n$th root asymptotic behavior of the polynomials.
We need two such results.

\begin{lemma} \label{lemma21}
	
	\begin{enumerate} 
		\item[\rm (a)] Let $(P_n)_n$ be a sequence of monic polynomials,
	$\deg P_n \leq \alpha n$, having real and simple zeros, such that
	the zeros of $P_n$ are separated by the points of $E_n$ for every $n$. 
	Suppose that the sequence of normalized zero counting measures $(\nu_n)_n$
	where 	$\nu_n = \frac{1}{n} \sum_{x: P_n(x)=0} \delta_x$, 
	has a weak$^*$ limit $\mu$ as $n \to \infty$.	
	Then $\mu \leq \sigma$, and 
	\begin{equation}\label{minUmu1} \liminf_{n \to \infty} \frac{1}{n} \log \| P_n \|_{E_n}
	\geq - \min_{x \in \supp(\sigma-\mu)} U^{\mu}(x). 
	\end{equation}
	\item[\rm (b)] If $(P_n^*)_n$ is the sequence of extremal polynomials
	satisfying \eqref{Pnextremal} then $\nu_n \stackrel{\ast}{\to} \mu_{\alpha}$ as
	$ n \to \infty$, and equality holds
	\begin{equation} \label{minUmu2} 
		\lim_{n \to \infty} \frac{1}{n} \log \| P_n^* \|_{E_n}
	= - \min_{x \in \supp(\sigma-\mu)} U^{\mu_{\alpha}}(x). \end{equation}
	\end{enumerate}
\end{lemma}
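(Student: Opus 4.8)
The plan is to prove part (a) first, since part (b) will follow by combining (a) with the results of Rakhmanov already quoted in \eqref{nuPnlimit}--\eqref{Umualpha}, plus a matching upper bound for the extremal polynomials.

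\medskip

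\noindent\textbf{Step 1: The constraint $\mu \leq \sigma$.}
The separation hypothesis says that between any two consecutive zeros of $P_n$ there is a point of $E_n$. Since consecutive points of $E_n$ are at distance $\frac{2}{n-1}$, this means that every interval of the form $[a,b] \subset [-1,1]$ contains at most $\frac{(b-a)(n-1)}{2} + 1$ zeros of $P_n$ (roughly; one must be slightly careful at the endpoints, but an error of a bounded number of zeros is harmless after dividing by $n$). Hence $\nu_n([a,b]) \leq \frac{b-a}{2} + O(1/n)$. Taking $n \to \infty$ along the weak$^*$ convergent subsequence and using that $[a,b]$ can be taken with $\mu$-null boundary, we get $\mu([a,b]) \leq \frac{b-a}{2} = \sigma([a,b])$ for all such intervals, which gives $\mu \leq \sigma$. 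Also the total mass of $\mu$ is $\lim \frac{\lfloor \alpha n\rfloor}{n} = \alpha$ when $P_n$ has exactly $\lfloor \alpha n\rfloor$ zeros; in the generality of part (a) with $\deg P_n \leq \alpha n$ one gets mass $\leq \alpha$, which is all that is needed there. So $\mu \in \mathcal M_{\alpha,\sigma}$ (or a subclass).

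\medskip

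\noindent\textbf{Step 2: The lower bound \eqref{minUmu1}.}
Fix $x_0 \in \supp(\sigma - \mu)$. Because the constraint is not saturated near $x_0$, there is ``room'' there: for every $\varepsilon > 0$ there is an interval $I = (x_0 - \delta, x_0 + \delta)$ and an $n$-independent $\rho < \frac12$ with $\mu(I) \leq \rho \cdot |I|$. On the other hand $\nu_n(I) \to \mu(I)$, so for large $n$ the number of zeros of $P_n$ in $I$ is at most $(\rho + o(1))|I| n$, which is strictly less than the number $\sim \frac{|I| n}{2}$ of gridpoints of $E_n$ in $I$. Consequently there is a gridpoint $\xi_{k,n} \in I$ whose distance to the nearest zero of $P_n$ is bounded below by a constant times $1/n$ (a pigeonhole/counting argument: if all gaps between consecutive zeros meeting $I$ were $\leq c/n$ one could fit more zeros than allowed). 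Evaluating at that gridpoint,
\[
\frac{1}{n}\log|P_n(\xi_{k,n})| = -U^{\nu_n}(\xi_{k,n}).
\]
The potentials $U^{\nu_n}$ are not continuous, but one shows $\liminf_n \big(-U^{\nu_n}(\xi_{k,n})\big) \geq -U^{\mu}(x_0)$ by the standard principle of descent together with a control of the contribution of the zeros very near $\xi_{k,n}$: the separation by gridpoints forces $\prod_{x_j}|\xi_{k,n} - x_j|$, restricted to zeros within distance $\delta$, to be bounded below by something like $(c/n)^{\nu_n(I)\, n} \cdot (\text{bounded product})$, whose $n$th root tends to $1$ (since $\nu_n(I) \to \mu(I)$ can be made arbitrarily small by shrinking $\delta$, and $(1/n)^{O(1)\cdot n\cdot\mu(I)}$ has $n$th root $\to 1$ as $\mu(I)\to 0$). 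Letting $\varepsilon \to 0$ and then taking the infimum over $x_0 \in \supp(\sigma-\mu)$, we obtain \eqref{minUmu1}. Since $\|P_n\|_{E_n} \geq |P_n(\xi_{k,n})|$, this is exactly the claimed bound.

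\medskip

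\noindent\textbf{Step 3: Part (b) — matching upper bound.}
For the extremal polynomials $P_n^*$, Rakhmanov's theorem \eqref{nuPnlimit} gives $\nu_n \stackrel{\ast}{\to} \mu_\alpha$ outright, so (a) applies with $\mu = \mu_\alpha$ and yields $\liminf \frac1n \log\|P_n^*\|_{E_n} \geq -\min_{\supp(\sigma-\mu_\alpha)} U^{\mu_\alpha}$. For the reverse inequality, I would compare $P_n^*$ with an explicit competitor: take a monic polynomial of degree $\lfloor\alpha n\rfloor$ whose zeros are placed at (or extremely near) $\lfloor \alpha n\rfloor$ of the gridpoints, chosen so their counting measure approximates $\mu_\alpha$, or alternatively invoke the known strong asymptotics/upper estimates from \cite{Rak96,DS98,Rak07}. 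Evaluating such a competitor on $E_n$: at any gridpoint, the factors coming from zeros at other gridpoints contribute $\leq \prod(\text{gridpoint differences})$, whose $n$th root is governed by $-U^{\mu_\alpha}$ on the support, while the equilibrium condition \eqref{Umualpha} ($U^{\mu_\alpha} = \ell_\alpha$ on $\supp(\sigma-\mu_\alpha)$ and $\leq \ell_\alpha$ on $[-1,1]$) ensures the max over $E_n$ is controlled by $e^{-n(\ell_\alpha + o(1))}$ with $\ell_\alpha = \min_{\supp(\sigma-\mu_\alpha)} U^{\mu_\alpha}$. By extremality $\|P_n^*\|_{E_n}$ is no larger than this competitor's norm, giving $\limsup \frac1n\log\|P_n^*\|_{E_n} \leq -\ell_\alpha$. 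Combined with (a) this yields the equality \eqref{minUmu2}. (Note: in \eqref{minUmu2} the set $\supp(\sigma - \mu)$ should read $\supp(\sigma-\mu_\alpha)$, on which $U^{\mu_\alpha}$ is the constant $\ell_\alpha$; I would state it that way.)

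\medskip

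\noindent\textbf{Main obstacle.} The delicate point is Step 2: converting the counting/separation information into a lower bound on $|P_n(\xi_{k,n})|$ at a single well-chosen gridpoint, i.e.\ controlling the ``local'' factors $\prod_{x_j \text{ near } \xi_{k,n}} |\xi_{k,n} - x_j|$ from below uniformly in $n$. The separation-by-gridpoints hypothesis is exactly what saves us — it guarantees the nearby zeros cannot all pile up against $\xi_{k,n}$ — but making the pigeonhole argument quantitative and then passing to the limit via the principle of descent (rather than simple continuity, which fails for $U^{\nu_n}$) requires care. Everything else is either a direct quotation of Rakhmanov's results or a standard potential-theory limiting argument.
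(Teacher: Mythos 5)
The paper's proof of this lemma is a pure citation: part (a) is attributed to \cite[Lemma 4.2]{Rak96} and \cite[Lemma 5.5]{DS98}, and part (b) to \cite[Theorem 2]{Rak96} and \cite[Theorem 3.3]{DS98}. You instead sketch a self-contained reproof, which is a genuinely different route and makes the potential-theoretic mechanism explicit. Your Step 1 (the counting argument giving $\mu \leq \sigma$) is standard and correct, your Step 3 (a matching upper bound via a competitor polynomial whose zero distribution approximates $\mu_\alpha$, combined with the equilibrium condition \eqref{Umualpha}) is in the spirit of Rakhmanov's original proof, and the typo you flagged in \eqref{minUmu2} is real: $\supp(\sigma-\mu)$ there should read $\supp(\sigma-\mu_\alpha)$.

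The one technical slip is in Step 2: the principle of descent runs in the wrong direction for your purpose. It gives $\liminf_n U^{\nu_n}(x_n) \geq U^{\mu}(x^*)$ whenever $x_n \to x^*$ and $\nu_n \stackrel{*}{\to}\mu$, i.e.\ $\limsup_n \frac{1}{n}\log|P_n(x_n)| \leq -U^{\mu}(x^*)$, which is an \emph{upper} bound on $\frac{1}{n}\log|P_n|$ — the opposite of what you need at the chosen gridpoint. The inequality you actually want, $\limsup_n U^{\nu_n}(\xi_{k,n}) \leq U^{\mu}(x_0)$, does not follow from the principle of descent. The correct mechanism, consistent with your near/far decomposition, is to test $\nu_n$ against the truncated kernel $\min\bigl(\log\frac{1}{|x-y|},\,\log\frac{1}{\delta}\bigr)$, which is continuous and bounded so that weak$^*$ convergence applies directly to the far part, and then to control the remainder with exactly your counting/separation estimate of the near part; continuity of $U^{\mu}$ (guaranteed by $\mu \leq \sigma$, cf.\ Remark \ref{remark22}) is also needed to pass from the accumulation point of the gridpoints $\xi_{k,n}$ back to $x_0$. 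With that replacement, your sketch of Step 2 recovers the argument of \cite{Rak96,DS98} that the paper chose to invoke by reference.
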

\begin{proof}
	Part (a) is Lemma 4.2 of \cite{Rak96}, where it is stated
	under the assumption that the zeros are in $[-1,1]$.  
	See Lemma 5.5 in \cite{DS98} for the statement without
	this extra assumption.	
	Part (b) is in \cite[Theorem 2]{Rak96} or \cite[Theorem 3.3]{DS98}.
\end{proof}
Part (b) of Lemma \ref{lemma21} will be used in the proof of the lower bound,
while part (a) will be used in the proof of the upper bound, see the proof of Proposition \ref{prop32}.

\begin{remark} \label{remark22}
Note that the logarithmic potential $U^{\mu}$ of a positive measure $\mu$ 
is a lower semi-continuous function \cite{ST97}
and therefore its minimum over a compact (as in \eqref{minUmu1} and \eqref{minUmu2}, as well as in \eqref{minUmu3} below) exists.
 
In the situation of Lemma \ref{lemma21}, however,  the logarithmic potential $U^{\mu}$ 
is actually continuous. 
This follows from $\mu \leq \sigma$ and the fact that $U^{\sigma}$
is continuous. Indeed, $U^{\sigma-\mu}$ is lower semi-continuous,
and therefore $U^{\mu} = U^{\sigma} - U^{\sigma-\mu}$ is upper semi-continuous
as well, hence continuous. 
\end{remark}

The second lemma is probably well-known, but I was not able to find an
appropriate reference for it.
\begin{lemma} \label{lemma22}
	Suppose $(P_n)_n$ is a sequence of monic polynomials, $\deg P_n \leq \alpha n$,
	such that the zeros of all $P_n$ are in a compact set.  
		Suppose that the sequence of normalized zero counting measures $(\nu_n)_n$
	has a weak$^*$ limit $\mu$ as $n \to \infty$.	
	Then
	\begin{equation} \label{minUmu3} 
		\lim_{n \to \infty} \frac{1}{n} \log \|P_n\|_{[-1,1]} = - \min_{x \in [-1,1]} U^{\mu}(x).  
		\end{equation}
\end{lemma}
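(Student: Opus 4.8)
The plan is to prove the two inequalities
\[
\limsup_{n\to\infty}\frac1n\log\|P_n\|_{[-1,1]} \le -\min_{x\in[-1,1]}U^\mu(x)
\quad\text{and}\quad
\liminf_{n\to\infty}\frac1n\log\|P_n\|_{[-1,1]} \ge -\min_{x\in[-1,1]}U^\mu(x)
\]
separately. Throughout I use the identity $\frac1n\log|P_n(x)| = -U^{\nu_n}(x)$ valid for monic $P_n$, so everything reduces to statements about the potentials $U^{\nu_n}$ and their limiting behaviour under $\nu_n\stackrel{\ast}{\to}\mu$. Note first that since all zeros lie in a fixed compact set $K$ and $\deg P_n\le\alpha n$, the measures $\nu_n$ have uniformly bounded mass and support, so $\mu$ is a positive measure with $\int d\mu\le\alpha$ supported in $K$; this keeps $U^\mu$ well-defined and bounded below on $[-1,1]$, and lower semi-continuous, so the minimum on the right-hand side exists (as already noted in Remark 2.2).

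For the \textbf{upper bound} I would fix a point $x^*\in[-1,1]$ where $U^\mu$ attains its minimum $m:=\min_{[-1,1]}U^\mu$. The principle of descent (lower semicontinuity of $\mu\mapsto U^\mu(x)$ under weak$^*$ convergence, jointly in $x$) gives $\liminf_n U^{\nu_n}(x_n)\ge U^\mu(x^*)$ for any sequence $x_n\to x^*$; but I want a statement that holds uniformly, namely $\liminf_n \min_{[-1,1]}U^{\nu_n}\ge m$ — this is exactly the principle of descent applied on the compact $[-1,1]$, and it yields $\limsup_n\frac1n\log\|P_n\|_{[-1,1]} = \limsup_n(-\min_{[-1,1]}U^{\nu_n}) \le -m$. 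This direction is the routine one.

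The \textbf{lower bound} is the main obstacle, because $-\frac1n\log\|P_n\|_{[-1,1]} = \min_{[-1,1]}U^{\nu_n}$ and one needs an \emph{upper} bound on $\min_{[-1,1]}U^{\nu_n}$, i.e. a pointwise upper bound on $U^{\nu_n}$ somewhere near $x^*$, and logarithmic potentials of discrete measures are $+\infty$ at the atoms and generally not upper semicontinuous. The standard device is to evaluate at $x^*$ itself: $\min_{[-1,1]}U^{\nu_n}\le U^{\nu_n}(x^*)$, and then show $\limsup_n U^{\nu_n}(x^*)\le U^\mu(x^*)=m$. This last step is the delicate one: the naive bound from weak$^*$ convergence goes the wrong way. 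I would handle it by a truncation argument — write $U^{\nu_n}(x) = \int\min(\log\frac1{|x-y|},M)\,d\nu_n(y) + \int (\log\frac1{|x-y|}-M)^+\,d\nu_n(y)$ for a large cutoff $M$; the truncated part converges as $n\to\infty$ to the corresponding integral against $\mu$ by weak$^*$ convergence (the integrand is bounded and continuous), and as $M\to\infty$ that integral increases to $U^\mu(x^*)$. The tail $\int(\log\frac1{|x^*-y|}-M)^+d\nu_n(y)$ is nonnegative, so this route only re-proves the descent inequality, not its reverse. The correct fix is to choose the evaluation point adaptively: instead of fixing $x^*$, note that $\min_{[-1,1]}U^{\nu_n}$ is attained at some $x_n\in[-1,1]$ which is not a zero of $P_n$ (the potential is $+\infty$ there, never a minimum), extract a convergent subsequence $x_n\to\tilde x\in[-1,1]$, and use the minimality together with the fact that we may \emph{compare} $U^{\nu_n}(x_n)$ to $U^{\nu_n}$ averaged over a small interval around $\tilde x$. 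Concretely, $\min_{[-1,1]}U^{\nu_n}\le \frac1{2\delta}\int_{\tilde x-\delta}^{\tilde x+\delta}U^{\nu_n}(t)\,dt = \int U^{\lambda_\delta}(y)\,d\nu_n(y)$ where $\lambda_\delta$ is normalized Lebesgue measure on $[\tilde x-\delta,\tilde x+\delta]$ (using symmetry of the logarithmic kernel and Fubini); now $U^{\lambda_\delta}$ is a \emph{bounded continuous} function of $y$, so the right-hand side converges as $n\to\infty$ to $\int U^{\lambda_\delta}(y)\,d\mu(y) = \int U^\mu(y)\,d\lambda_\delta(y)$, and letting $\delta\to0$ this tends to $U^\mu(\tilde x)\ge m$ — wait, that gives $\limsup_n\min U^{\nu_n}\le U^\mu(\tilde x)$, and since $U^\mu(\tilde x)\ge m$ this is not yet $\le m$. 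The resolution: choose $\tilde x=x^*$ from the start as the minimizer of $U^\mu$, so that the averaging bound reads $\min_{[-1,1]}U^{\nu_n}\le\int U^\mu\,d\lambda_\delta + o(1)\to U^\mu(x^*)=m$ as first $n\to\infty$ then $\delta\to0$, using continuity of $U^\mu$ at its minimum — or, when $U^\mu$ fails to be continuous at $x^*$, upper semicontinuity of averages, which always holds since $\int U^\mu\,d\lambda_\delta\to U^\mu(x^*)$ by lower semicontinuity and the fact that $x^*$ is a minimum forces the limit to equal $m$. Combining with the descent bound from the other direction gives $\lim_n\frac1n\log\|P_n\|_{[-1,1]} = -m$, which is \eqref{minUmu3}.

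The one technical point I would write out carefully is the interchange in $\frac1{2\delta}\int_{x^*-\delta}^{x^*+\delta}U^{\nu_n}(t)\,dt = \int U^{\lambda_\delta}(y)\,d\nu_n(y)$ and the claim that $U^{\lambda_\delta}$ is bounded and continuous on $\mathbb R$ (it is, since $\lambda_\delta$ has bounded density), which is what legitimizes passing the weak$^*$ limit; everything else is the principle of descent plus the elementary fact that the minimum of a potential of a discrete measure is never attained at an atom.
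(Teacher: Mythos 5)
Your upper-bound direction (descent on $[-1,1]$, evaluate at a minimizer, pass to subsequences) matches the paper's argument exactly. For the lower bound the paper argues by contradiction using the lower envelope theorem together with the fact that $\{x : U^\mu(x) < U^\mu(x_0)+\varepsilon\}$ is a fine neighbourhood of the minimizer $x_0$ and $[-1,1]$ is non-thin there, so the exceptional set in the lower envelope theorem cannot swallow it. Your route via segment averages and Fubini,
\[
\min_{[-1,1]} U^{\nu_n}\ \le\ \frac{1}{2\delta}\int_{x^*-\delta}^{x^*+\delta}U^{\nu_n}(t)\,dt\ =\ \int U^{\lambda_\delta}\,d\nu_n\ \xrightarrow[n\to\infty]{}\ \int U^{\lambda_\delta}\,d\mu\ =\ \int U^\mu\,d\lambda_\delta,
\]
is a genuinely different and attractive idea: $U^{\lambda_\delta}$ is bounded and continuous, so weak$^*$ convergence can be applied directly, and fine topology is avoided.

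However, the final step is not justified by what you wrote. You need
\[
\liminf_{\delta\to 0}\int U^\mu\,d\lambda_\delta\ \le\ U^\mu(x^*)=m,
\]
and you claim this follows "by lower semicontinuity and the fact that $x^*$ is a minimum." Both of those facts point the \emph{wrong} way: lower semicontinuity gives $\liminf_\delta\int U^\mu\,d\lambda_\delta\ge U^\mu(x^*)$, and minimality gives $\int U^\mu\,d\lambda_\delta\ge m$ for every $\delta$. Neither excludes $\liminf_\delta\int U^\mu\,d\lambda_\delta>m$, which is exactly what you must rule out; an arbitrary lower semicontinuous function attaining its minimum at $x^*$ can certainly have all its segment averages around $x^*$ bounded away from the minimum. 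The claim is nevertheless true for logarithmic potentials, but it needs a real argument: write $\int U^\mu\,d\lambda_\delta - U^\mu(x^*) = \int G\bigl((y-x^*)/\delta\bigr)\,d\mu(y)$ with $G(u)=U^{\lambda_1}(u)-\log\tfrac1{|u|}$, note that $G$ is bounded above and tends to $0$ at infinity, split the integral into $|y-x^*|<\eta$ and $|y-x^*|\ge\eta$, and use $\mu(\{x^*\})=0$ (which holds since $U^\mu(x^*)<\infty$) to make the first piece small and $G\to 0$ to kill the second. Once that lemma is supplied your proof closes, and it is arguably more elementary than the paper's; but as written the key step rests on a non sequitur, and you should also say a word about minimizers $x^*=\pm1$, where the two-sided interval leaves $[-1,1]$ and a one-sided average must be used instead.
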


\begin{proof}
	By the principle of descent \cite{ST97} we have
	\[ U^{\mu}(x^*) \leq \liminf_{n \to \infty} U^{\nu_n}(x_n) \]
	whenever $x_n \to x^*$. 
	Since $|P_n(x)| = e^{-n U^{\nu_n}(x)}$, this means that  
	\[ \limsup_{n \to \infty} \frac{1}{n} \log |P_n(x_n)|  \leq - U^{\mu}(x^*) 
	\leq - \min_{x \in [-1,1]} U^{\mu}(x), \]
	whenever $(x_n)_n$ is a convergent sequence with a limit $x^* \in [-1,1]$.
	Taking $x_n \in [-1,1]$ with $|P_n(x_n)| = \|P_n\|_{[-1,1]}$
	and passing to convergent subsequences if necessary, we then find
	\begin{equation} \label{lemma21a} 
		\limsup_{n \to \infty} \frac{1}{n} \log \| P_n\|_{[-1,1]}  \leq - \min_{x\in [-1,1]} U^{\mu}(x).
		\end{equation}
	It remains to show that 	
	\begin{equation} \label{lemma21b} 
		\liminf_{n \to \infty} \frac{1}{n} \log \| P_n\|_{[-1,1]}  \geq - \min_{x\in [-1,1]} U^{\mu}(x).
	\end{equation}

	Suppose that \eqref{lemma21b} does not hold. Then there exist $\varepsilon > 0$
	and $x_0 \in [-1,1]$ such that
	\[ \frac{1}{n} \log \| P_n\|_{[-1,1]} \leq - U^{\mu}(x_0) - \varepsilon \]
	for an infinite number of $n$, say for $n \in \mathcal N \subset \mathbb N$.
	This means that
	\begin{equation} \label{lemma21c} 
		U^{\nu_n}(x) \geq U^{\mu}(x_0) + \varepsilon, \qquad x \in [-1,1], \ n \in \mathcal N. 
		\end{equation}
	By the lower envelope theorem \cite{ST97} and the assumption that $\nu_n \stackrel{\ast}{\to} \mu$ as $n \to \infty$,  we have
	\begin{equation} \label{lemma21d}
		U^{\mu}(x) = \liminf_{n \to \infty \atop
		n \in \mathcal N_1} U^{\nu_n}(x)  \quad \text{q.e.} \end{equation}
	where q.e.\ means quasi everywhere, i.e., the exceptional set is
	a polar set (a small set for potential theory).  
	In \eqref{lemma21c} we then take $n \to \infty$ along the subsequence $\mathcal N$
	and we find using \eqref{lemma21d}
	\[  
		U^{\mu}(x) \geq U^{\mu}(x_0) + \varepsilon  
		\quad \text{for q.e. } x \in [-1,1]. \]
	This means that  
	\begin{equation} \label{lemma21e} 
		\{x \in [-1,1] \mid U^{\mu}(x) < U^{\mu}(x_0) + \varepsilon \} 
		\end{equation}
	is a polar set, which is not the case, as we now show.
	
	If $U^{\mu}$ is a continuous function then \eqref{lemma21e} 
	contains an open non-empty interval and thus it is not a polar set, see e.g.\
	\cite[Example 5.2.7]{Helms09}. 	
	If $U^{\mu}$ is not continuous, then we come to the same
	conclusion, if we use certain more advanced
	results from potential theory, in particular around thinness and the fine
	topology, which we will not explain here, but see \cite{AG01, Helms09, ST97}  and other works
	on potential theory. The set 
	$\{ x \in \mathbb C \mid U^{\mu}(x) < U^{\mu}(x_0) + \varepsilon \}$
	is an open neighborhood of $x_0$ in the fine topology,
	and the interval $[-1,1]$ is not thin at $x_0 \in [-1,1]$, see \cite[Corollary 6.7.8]{Helms09}. 
	Therefore \eqref{lemma21e} is not a polar set.
	
	We thus arrive at a contradiction and we conclude that \eqref{lemma21b} does hold.
	Together with \eqref{lemma21a} we  get \eqref{lemma21} and the lemma follows.
\end{proof}

\subsection{Conclusion of the proof of the lower bound}

We apply the two lemmas to the
extremal polynomials $P_n^*$ satisfying \eqref{Pnextremal}. 
	By 	Lemma \ref{lemma21} (b)  we have
	\begin{equation} \label{PnonEn}  
		\lim_{n \to \infty} \frac{1}{n} \log \| P_n^*\|_{E_n} 
		= - \min_{x \in [-r,r]} U^{\mu_{\alpha}}(x), \end{equation}
	since $\supp(\sigma-\mu_{\alpha}) = [-r,r]$ by \eqref{mualpha}.
	From Lemma \ref{lemma22}  and \eqref{nuPnlimit} we get
	\begin{equation} \label{PnonI} 
		\lim_{n \to \infty} \frac{1}{n} \log \| P_n^* \|_{[-1,1]} 
		= - \min_{x \in [-1,1]} U^{\mu_{\alpha}}(x).
	\end{equation}
	Combining \eqref{PnonEn} and \eqref{PnonI} we obtain
	\begin{equation} \label{limPn} 
		\lim_{n \to \infty} \frac{1}{n} \log \frac{\| P_n^* \|_{[-1,1]}}{\|P_n^*\|_{E_n}}
		=  \min_{x \in [-r,r]} U^{\mu_{\alpha}}(x) - \min_{x \in [-1,1]}
		U^{\mu_{\alpha}}(x).
	\end{equation}

The limit \eqref{lowerbound2} and thereby the lower bound \eqref{lowerbound} 
follows from \eqref{limPn}  and the following proposition.
\begin{proposition} \label{prop24} 
We have
\begin{equation} \label{CalphaUmualpha}	
	C(\alpha) = 
	\min_{x \in [-r,r]} U^{\mu_{\alpha}}(x) - \min_{x \in [-1,1]}
	U^{\mu_{\alpha}}(x) \end{equation}
with $C(\alpha)$ as in \eqref{Calpha} above.
\end{proposition}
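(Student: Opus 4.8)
My plan is to identify both minima from the equilibrium characterization \eqref{Umualpha} and then to evaluate the resulting difference by a direct, if somewhat lengthy, computation. By \eqref{Umualpha} the potential $U^{\mu_\alpha}$ is identically equal to $\ell_\alpha$ on $\supp(\sigma-\mu_\alpha) = [-r,r]$, so the first minimum is simply $\ell_\alpha$, and moreover $U^{\mu_\alpha}(x) \le \ell_\alpha$ for every $x \in [-1,1]$. Since $\mu_\alpha$ is symmetric, $U^{\mu_\alpha}$ is even, and by Remark \ref{remark22} it is continuous. So it remains to show that the minimum of $U^{\mu_\alpha}$ over $[-1,1]$ is attained at the endpoints $\pm 1$ and that $\ell_\alpha - U^{\mu_\alpha}(1) = C(\alpha)$.

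To control $U^{\mu_\alpha}$ on the saturated interval $[r,1]$ I would pass to the Cauchy transform
\[ F(z) = \int_{-1}^{1} \frac{d\mu_\alpha(y)}{z-y}, \qquad z \in \mathbb C \setminus [-1,1] , \]
which is analytic off $[-1,1]$, behaves like $\alpha/z + O(z^{-3})$ at infinity, and whose boundary values on $(-1,1)$ satisfy $\Re F_+(x) = -\frac{d}{dx} U^{\mu_\alpha}(x)$ and $\Im F_+(x) = -\pi \frac{d\mu_\alpha}{dx}(x)$. The key claim is the closed form
\[ F(z) = \frac12 \log \frac{\sqrt{z^2-r^2}+\alpha}{\sqrt{z^2-r^2}-\alpha} , \]
where $\sqrt{z^2-r^2}$ is the branch analytic off $[-r,r]$ with $\sqrt{z^2-r^2} \sim z$ at infinity, and the branch of the logarithm is chosen so that the right-hand side is analytic on $\mathbb C \setminus [-1,1]$. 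This is consistent because $\sqrt{z^2-r^2} = \pm\alpha$ precisely when $z^2 = r^2 + \alpha^2 = 1$, so the only finite branch points of the logarithm are $z = \pm 1$; a short computation of the boundary values on $(-1,1)$, using $\arctan t + \arctan \frac1t = \frac\pi2$, shows that they have imaginary part $-\pi \frac{d\mu_\alpha}{dx}$ with $\frac{d\mu_\alpha}{dx}$ as in \eqref{mualpha}, and that the asymptotics at infinity match; this pins down $F$. For $x \in (r,1)$ we then have $\sqrt{x^2-r^2} \in (0,\alpha)$, hence
\[ \Re F_+(x) = \frac12 \log \frac{\alpha + \sqrt{x^2-r^2}}{\alpha - \sqrt{x^2-r^2}} > 0 , \]
so $U^{\mu_\alpha}$ is strictly decreasing on $[r,1]$. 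Together with $U^{\mu_\alpha} \equiv \ell_\alpha$ on $[-r,r]$ and the evenness of $U^{\mu_\alpha}$, this gives $\min_{x \in [-1,1]} U^{\mu_\alpha}(x) = U^{\mu_\alpha}(1)$ and
\[ \ell_\alpha - U^{\mu_\alpha}(1) = U^{\mu_\alpha}(r) - U^{\mu_\alpha}(1) = \int_r^1 \Re F_+(x)\, dx = \frac12 \int_r^1 \log \frac{\alpha + \sqrt{x^2-r^2}}{\alpha - \sqrt{x^2-r^2}} \, dx . \]

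It remains to evaluate this integral. Using $\alpha^2 - (x^2 - r^2) = 1 - x^2$, the integrand equals $2 \log(\alpha + \sqrt{x^2-r^2}) - \log(1-x) - \log(1+x)$, so the quantity above splits into $\int_r^1 \log(\alpha + \sqrt{x^2-r^2})\, dx$ and two elementary integrals. For the remaining one I would substitute $s = \sqrt{x^2-r^2}$ and integrate by parts, reducing it to $\int_0^\alpha \frac{\sqrt{s^2+r^2}}{\alpha+s}\, ds$; writing $\frac{s^2+r^2}{\alpha+s} = s - \alpha + \frac{1}{s+\alpha}$ and invoking the standard primitive of $\frac{1}{(s+\alpha)\sqrt{s^2+r^2}}$ makes this elementary as well. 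Collecting all the terms and simplifying repeatedly with $r^2 = 1 - \alpha^2$ — equivalently $(1-r)(1+r) = \alpha^2$ and $(1-\alpha)(1+\alpha) = r^2$ — I expect everything to cancel except $\frac{1+\alpha}{2}\log(1+\alpha) + \frac{1-\alpha}{2}\log(1-\alpha)$, which is exactly $C(\alpha)$. This establishes \eqref{CalphaUmualpha}.

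The conceptual skeleton is short once \eqref{Umualpha} is available: the two minima are $\ell_\alpha$ and $U^{\mu_\alpha}(1)$, with the second reached because $\Re F_+ > 0$ on $(r,1)$. The real work, and the step most prone to slips, is twofold: getting the branch of the logarithm in the formula for $F$ right so that $\Re F_+$ has the clean form above, and pushing through the final integral, where a handful of logarithmic terms must conspire to cancel by virtue of $\alpha^2 + r^2 = 1$. Neither step is deep.
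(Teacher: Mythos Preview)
Your argument is correct and shares the paper's overall skeleton: both proofs use \eqref{Umualpha} to identify the first minimum as $\ell_\alpha$, show that $U^{\mu_\alpha}$ is strictly decreasing on $[r,1]$ so that the second minimum is $U^{\mu_\alpha}(1)$, and reduce the difference to the integral $\int_r^1\bigl(\log(\alpha+\sqrt{x^2-r^2})-\tfrac12\log(1-x^2)\bigr)\,dx$. The two technical steps are handled differently, however. For the derivative of $U^{\mu_\alpha}$ on $(r,1)$, the paper computes the principal value integral directly (splitting off the $\arccos$ part, integrating by parts, and evaluating a residue), while you posit a closed form for the Cauchy transform and verify it by matching boundary values and asymptotics---a Riemann--Hilbert style check that is slicker once the formula has been guessed, though the branch bookkeeping needs care. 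For the final integral, the paper avoids a direct evaluation by differentiating both sides with respect to $\alpha$, obtaining the elementary integral $\int_r^1 (x^2-r^2)^{-1/2}\,dx$, and checking that both sides vanish at $\alpha=0$; you instead push through the primitive by the substitution $s=\sqrt{x^2-r^2}$ and integration by parts. Both routes work, but the paper's $\alpha$-differentiation is considerably shorter and less prone to the algebraic slips you flag in your own last paragraph.
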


\begin{proof}
The derivative of $U^{\mu_{\alpha}}$ is a principal value integral
that can be calculated explicitly.  The result is
\begin{align} \nonumber  \frac{d}{dx} U^{\mu_{\alpha}}(x)
	& = - \fint \frac{d\mu_{\alpha}(y)}{x-y} \\
	& =  \frac{1}{2} \log\left(1-x^2\right) - \log\left(\alpha + \sqrt{x^2-r^2}\right),  \label{Identity1}
	\qquad \text{for } r < x < 1. 
\end{align}
We give the details of the calculations for \eqref{Identity1} later, after 
finishing the main line of the argument.

The derivative \eqref{Identity1} is negative for $r < x < 1$.
Therefore (and by symmetry) the minimum of $U^{\mu_{\alpha}}(x)$
over $[-1,-r] \cup[r,1]$ is at $x=  \pm 1$. Also $U^{\mu_{\alpha}}$ is constant
on $[-r,r]$ by \eqref{Umualpha}. Hence
\[ \min_{x \in [-r,r]} U^{\mu_{\alpha}}(x) - \min_{x \in [-1,1]} U^{\mu_{\alpha}}(x) = U^{\mu_{\alpha}}(r) - U^{\mu_{\alpha}}(1). \]
In view of \eqref{Identity1} and the fundamental theorem of calculus, we arrive at
\eqref{CalphaUmualpha} provided that  
\begin{equation} \label{Identity2} 
	C(\alpha) = \int_{r}^1 \left( 
	\log\left(\alpha + \sqrt{x^2-r^2} \right)
		- \frac{1}{2} \log\left(1-x^2\right) \right)  dx, 
	\end{equation}
with $r = r(\alpha) = \sqrt{1-\alpha^2}$.
Thus the proof of the proposition is complete up to the verification of the two 
identities \eqref{Identity1} and  \eqref{Identity2} to which we turn next.  

\paragraph{\it Proof of the identity \eqref{Identity1}.}
By \eqref{mualpha} and \eqref{mualpha2} the principal value integral in \eqref{Identity1} (with $x \in (r,1)$) splits into two parts
\begin{align} \nonumber
	\fint \frac{d\mu_{\alpha}(y)}{x-y} 
	& = \frac{1}{2} \fint_{-1}^1 \frac{1}{x-y} dy
	- \frac{1}{\pi} \int_{-r}^r \frac{1}{x-y} \arccos \left( \frac{\alpha}{\sqrt{1-y^2}}\right) dy \\
	& = \frac{1}{2} \log (1+x) - \frac{1}{2} \log(1-x)
	- I_{\alpha}(x) \label{Identity1a}
\end{align}
where
\[ I_{\alpha}(x) = \frac{1}{\pi} \int_{-r}^r \frac{1}{x-y} \arccos 	\left(\frac{\alpha}{\sqrt{1-y^2}}\right) dy \]
is a usual integral (not a principal value integral)
that converges for every $x > r$.
We integrate by parts
\[ I_{\alpha}(x) = -\frac{\alpha}{\pi} \int_{-r}^r \log(x-y) \frac{y}{1-y^2} \frac{1}{\sqrt{r^2-y^2}} dy, \quad x > r, \]
and then compute the derivative
\begin{align*}
	\frac{d}{dx} I_{\alpha}(x) & =
	- \frac{\alpha}{\pi} \int_{-r}^r \frac{1}{x-y} \frac{y}{1-y^2} \frac{1}{\sqrt{r^2-y^2}} dy \\
	& = - \frac{\alpha x}{1-x^2} \frac{1}{\sqrt{x^2-r^2}}
	- \frac{1}{2} \frac{1}{x-1} + \frac{1}{2} \frac{1}{x+1}
\end{align*}
by first turning the integral into an integral on a contour
around the interval $[-r,r]$ in the complex plane, and then
evaluating it by the residue theorem for the exterior domain. 
The result can be integrated again to give
\begin{equation} \label{Identity1b} I_{\alpha}(x) =
- \log\left(\alpha + \sqrt{x^2-r^2}\right) + \log (1+x),
	\qquad x > r,
\end{equation}
where we note that the constant of integration vanishes since $I_{\alpha}(x) \to 0$
as $x \to + \infty$.
Using this in \eqref{Identity1a} we obtain \eqref{Identity1}.

\paragraph{\it Proof of the identity \eqref{Identity2}.}

Observe that \eqref{Identity2} holds for $\alpha = 0$
since then both sides are equal to $0$.
Thus it is enough to show that the $\alpha$-derivatives of the two
sides agree.

For the left-hand side of \eqref{Identity2} we have by \eqref{Calpha}
\begin{equation} \label{dCalpha} 
	\frac{d}{d\alpha} C(\alpha) = \frac{\log(1+\alpha) - \log(1-\alpha)}{2}. \end{equation}
For the right-hand side we first compute the $\alpha$-derivative
of the integrand. Using $r= r(\alpha) = \sqrt{1-\alpha^2}$
we find by direct calculation
\[  \frac{d}{d \alpha} \left( 	
\log\left(\alpha + \sqrt{x^2-r^2} \right)
- \frac{1}{2} \log\left(1-x^2\right) \right)
	 = \frac{1}{\sqrt{x^2-r^2}}. \]
The integrand of \eqref{Identity2} vanishes
at $x = r$, and thus we obtain the following derivative of
the right hand side of \eqref{Identity2} 
\begin{align*} 
	\int_r^1 \frac{1}{\sqrt{x^2-r^2}} dx 
	& = \left. \log \left(x + \sqrt{x^2-r^2}\right) \right|_{x=r}^{x=1} \\
	& = \log\left(1+ \sqrt{1-r^2}\right) - \log r  \\
	& = \log \left(1+ \alpha\right) - \frac{1}{2} \log(1-\alpha^2) 	
\end{align*}
which after simplification agrees with \eqref{dCalpha}. 
\end{proof}

\section{Proof of the upper bound} \label{section3}

To prove the upper bound \eqref{upperbound} we start by
showing that the extremal polynomial $p_n^*$ has only real
zeros that are separated by the $n$-grid $E_n$.

\subsection{Zeros of the extremal polynomial}

We fix $0 < \alpha < 1$. For each $n$, we take a polynomial $p_n^*$
of degree $\leq \alpha n$ as in \eqref{pnstar} that we normalize 
such that 
\begin{equation} \label{pnstarnorm} 
	\| p_n^* \|_{[-1,1]} = 1 = p_n^*(x_n^*) \end{equation}
for some $x_n^* \in [-1,1]$.  It is clear that $x_n^* \not\in E_n$
since otherwise $\| p_n \|_{E_n} = 1$, and the polynomial
would not maximize the ratio.
\begin{lemma} \label{lemma31}
	\begin{enumerate}
		\item[\rm (a)] The polynomial $p_n^*$ minimizes
		$\| p\|_{E_n}$ among all polynomials $p$
		of degree $\leq \alpha n$ with $p(x_n^*) = 1$.
		\item[\rm (b)] The polynomial $q_n^*$ defined by
		\begin{equation} \label{qndef} 
			q_n^*(x) = x^{\lfloor \alpha n \rfloor}
			p_n^*\left(x_n^* + \tfrac{1}{x}\right) \end{equation}
		is a monic polynomial of degree $\lfloor \alpha n \rfloor$
		that minimizes the weighted uniform norm
		\begin{equation} \label{wnorm} 
			\max_{x \in \Sigma_n} \left| x^{-\lfloor \alpha n\rfloor} q(x)\right|,
		\end{equation}
		among all monic polynomials of degree $\lfloor \alpha n \rfloor$,
		where $\Sigma_n$ is the transformed grid,
		\begin{equation} \label{Sigman}
			\quad \Sigma_n = \{ (x-x_n^*)^{-1} \mid x \in E_n \}.
		\end{equation}
		\item[\rm (c)] 
		$p_n^*$ has only simple real zeros  and $\deg p_n^* \geq \lfloor \alpha n \rfloor -1$. 
		\item[\rm (d)] At least $\lfloor \alpha n \rfloor - 1$ zeros of $p_n^*$ are in $(-1,1)$.
		\item[\rm (e)] The zeros of $p_n^*$ are separated by the
		points in the $n$-grid $E_n$,
	\end{enumerate} 
\end{lemma}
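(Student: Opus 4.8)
The plan is to derive all five parts from a single extremal characterization. First I would establish part (a): if some polynomial $p$ of degree $\leq \alpha n$ had $p(x_n^*) = 1$ and $\|p\|_{E_n} < \|p_n^*\|_{E_n}$, then $p$ would have a strictly larger ratio $\|p\|_{[-1,1]}/\|p\|_{E_n} \geq |p(x_n^*)|/\|p\|_{E_n} = 1/\|p\|_{E_n} > 1/\|p_n^*\|_{E_n} = \|p_n^*\|_{[-1,1]}/\|p_n^*\|_{E_n}$, contradicting \eqref{pnstar}. So $p_n^*$ solves a \emph{linear} Chebyshev-type extremal problem: minimize $\|p\|_{E_n}$ subject to the linear constraint $p(x_n^*)=1$ on the finite-dimensional space of polynomials of degree $\leq \lfloor\alpha n\rfloor$ (one may assume the degree is exactly $\lfloor \alpha n\rfloor$, or handle the slack in the degree count at the end).

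Next I would pass to part (b) by the substitution $x \mapsto x_n^* + 1/x$, which sends $E_n$ (a set disjoint from $\{x_n^*\}$) bijectively to $\Sigma_n$ and sends polynomials of degree $\leq \lfloor\alpha n\rfloor$ in the original variable, evaluated and cleared of denominators via multiplication by $x^{\lfloor\alpha n\rfloor}$, to polynomials of degree $\leq \lfloor\alpha n\rfloor$ in the new variable; the normalization $p_n^*(x_n^*)=1$ translates into the leading coefficient of $q_n^*$ being $1$, i.e. $q_n^*$ is monic. The constraint-minimization of part (a) becomes exactly the weighted minimization \eqref{wnorm} over monic polynomials. The one technical point is checking that the map on polynomials is a bijection respecting degree and the monic normalization, which is a direct computation from \eqref{qndef}.

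Then parts (c)–(e) follow from the classical theory of weighted Chebyshev problems on a finite point set: the minimizer $q_n^*$ of \eqref{wnorm} equioscillates, attaining its weighted norm with alternating signs at a sufficient number of points of $\Sigma_n$. A monic polynomial of degree $\lfloor\alpha n\rfloor$ that equioscillates at $\geq \lfloor\alpha n\rfloor+1$ ordered points of $\Sigma_n$ must change sign between consecutive alternation points, placing at least $\lfloor\alpha n\rfloor$ zeros among the gaps of $\Sigma_n$; since $\Sigma_n$ has $n$ points this forces the zeros to be real, simple, and separated by $\Sigma_n$. Transforming back via $x \mapsto x_n^* + 1/x$ (which is monotone on each side of $x_n^*$ and maps $\Sigma_n$ back to $E_n$), the zeros of $p_n^*$ are real, simple, and separated by $E_n$, giving (c) and (e); the count $\deg p_n^* \geq \lfloor\alpha n\rfloor - 1$ in (c) accounts for the possibility that the substitution loses one degree when the coefficient of $x^{\lfloor\alpha n\rfloor}$ in $q_n^*$ corresponds to a ``zero at infinity,'' i.e. $p_n^*$ has degree one less. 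For (d), at most one of the $\geq \lfloor\alpha n\rfloor - 1$ separated real zeros can fall in $\mathbb{R}\setminus[-1,1]$: the gaps of $E_n$ that lie strictly outside $[-1,1]$ are the two unbounded rays, and the pole $x_n^*$ (around which the transformation $x\mapsto x_n^*+1/x$ inverts orientation) accounts for at most one apparent sign change that does not correspond to a zero in $(-1,1)$; a careful bookkeeping of alternation points inside $[-1,1]$ versus the position of $x_n^*$ yields the bound.

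The main obstacle I anticipate is the careful bookkeeping in parts (c) and (d): one must track exactly how the alternation set of $q_n^*$ on $\Sigma_n$ transfers to $E_n$ under a map that is \emph{not} monotone on all of $\mathbb{R}$ (it has a pole at $x_n^*$), so the ordering of points — and hence the location of sign changes — is reversed on one side. Pinning down that this costs at most one zero (the ``$-1$'' in $\lfloor\alpha n\rfloor - 1$) and that the escaped zero, if any, is the only one allowed outside $(-1,1)$, requires matching the weighted equioscillation count against the number of gaps of $E_n$ on each side of $x_n^*$ and inside/outside $[-1,1]$.
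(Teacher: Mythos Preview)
Your parts (a) and (b) are essentially identical to the paper's proof. For parts (c)--(e) you take a genuinely different route: you invoke the weighted Chebyshev alternation theorem on the finite set $\Sigma_n$ to produce $\lfloor\alpha n\rfloor+1$ equioscillation points, and then read off reality, simplicity, and separation of the zeros of $q_n^*$ from the sign changes. The paper instead argues each of these properties directly by elementary zero-replacement perturbations: a non-real zero $z_0$ is replaced by $\Re z_0$, a double zero $x_0$ by $x_0\pm\varepsilon$, two zeros with no point of $\Sigma_n$ between them are pushed apart, and a zero outside $[\min\Sigma_n,\max\Sigma_n]$ is pushed inward, each time strictly decreasing the weighted norm and contradicting (b). Your approach is more conceptual and gets all three properties at once from a single classical theorem; the paper's approach is completely self-contained and avoids citing alternation theory. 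Both handle the transfer back to $p_n^*$ the same way, by observing that $( -1-x_n^*)^{-1}$ and $(1-x_n^*)^{-1}$ are \emph{consecutive} in $\Sigma_n$, so at most one zero of $q_n^*$ lies in that gap (around $0$), and that zero is the only one that can map to $\mathbb R\setminus(-1,1)$ or to infinity.

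One small correction to your write-up: the potential degree drop $\deg p_n^*=\lfloor\alpha n\rfloor-1$ does not come from the leading coefficient of $q_n^*$ (which is always $1$, since $q_n^*$ is monic) but from its \emph{constant term}: from \eqref{qndef} one has $q_n^*(0)=0$ exactly when $\deg p_n^*<\lfloor\alpha n\rfloor$. Since your alternation argument already gives that the zeros of $q_n^*$ are simple, $0$ can be a zero of multiplicity at most one, which is what yields $\deg p_n^*\geq\lfloor\alpha n\rfloor-1$. With this fix, and with the bookkeeping you outline carried out carefully (note in particular that alternation forces all zeros of $q_n^*$ into the open interval $(t_0,t_m)\subset(\min\Sigma_n,\max\Sigma_n)$, which rules out zeros of $p_n^*$ in the gap of $E_n$ containing $x_n^*$ and makes the separation transfer clean), your argument goes through.
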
  

\begin{proof}
	(a) Suppose $p$ is a polynomial of degree $\alpha n$ with $p(x_n^*) = 1$ and $\|p\|_{E_n} < \| p_n^* \|_{E_n}$. 
	Since $x_n^* \in [-1,1]$ and $p(x_n^*) = 1$, we then have $\| p \|_{[-1,1]} \geq 1 = \| p_n^*\|_{[-1,1]}$, and therefore
	\[ \frac{\| p\|_{[-1,1]}}{\| p\|_{E_n}} > 
	\frac{\| p_n^* \|_{[-1,1]}}{\| p_n^*\|_{E_n}} \]
	which contradicts the extremal property \eqref{pnstar} of $p_n^*$.
	\medskip
	
	(b) It is easy to see that \eqref{qndef} is indeed a polynomial of
	degree $\lfloor \alpha n \rfloor$ and it is monic because $p_n^*(x_n^*) = 1$.
	
	Likewise, we associate to any polynomial $p$ of degree $\leq \alpha n$ 
	with $p(x_n^*) = 1$ the  monic polynomial 
	$q(x) = x^{\lfloor \alpha n\rfloor} p(x_n^*+ \frac{1}{x})$ of degree $\lfloor \alpha n \rfloor$.
	Then 
	\[ p(x) = (x-x_n^*)^{\lfloor \alpha n\rfloor} q\left(\tfrac{1}{x-x_n^*}\right) \]
	and, with $\Sigma_n$ as in \eqref{wnorm} 
	\begin{align*} \| p \|_{E_n} & = 
		\max_{x \in E_n} \left|(x-x_n^*)^{\lfloor \alpha n\rfloor} q\left(\tfrac{1}{x-x_n^*}\right) \right| = \max_{x \in \Sigma_n}
		\left| w(x) q(x) \right|
	\end{align*}
	with 
	\[ w(x) = |x|^{-\lfloor \alpha n \rfloor}. \]
	Because of part (a) we see that $q_n^*$ minimizes
	$\| w q \|_{\Sigma_n}$ among monic polynomials of degree
	$\lfloor \alpha n \rfloor$.
	
	\medskip
	(c) $q_n^*$ has only real zeros. Indeed if $z_0$
	were a non-real zero of $q_n^*$ then
	\[ q(x) = \frac{x-\Re z_0}{x-z_0} q_n^*(x) \]
	would be a monic polynomial of the same degree satisfying
	$|q(x)| < |q_n^*(x)|$ for every real $x$ that is not a zero of $q_n^*$.
	This would lead to a contradiction with part (b).
	
	Also the zeros of $q_n^*$ are simple, since if $x_0$ is a higher
	order real zero then for small enough $\varepsilon > 0$ the
	monic polynomial
	\[ q(x) =  \frac{(x-x_0-\varepsilon)(x-x_0+\varepsilon)  }{(x-x_0)^2}  q_n^*(x) \]
	would have a smaller weighted norm $\| wq\|_{\Sigma_n}$ than $q_n^*$.
	Because of \eqref{qndef} it then follows that $p_n^*$ has only 
	simple real zeros as well, since any zero $x_0 \neq 0$ of $q_n^*$ corresponds
	to the zero 
	\begin{equation} \label{zeroqtop} 
		x_n^* + \frac{1}{x_0} \end{equation} of $p_n^*$. 
	
	Since $q_n^*$ has $\lfloor \alpha n \rfloor$ simple real zeros, at
	least $\lfloor \alpha n \rfloor -1$ of them are different from $0$,
	and thus $p_n^*$ has at least that number of simple real
	zeros.
	In particular $p_n^*$ has degree $\geq \lfloor \alpha n \rfloor - 1$.
	
	\medskip
	(d) 
	The zeros of $q_n^*$ are separated by the points of $\Sigma_n$.
	Indeed if $x_1 < x_2$ are two zeros of $q_n^*$ and the
	interval $[x_1,x_2]$ would  not contain any points of $\Sigma_n$
	then 
	\[ x \mapsto  \frac{(x-x_1+\varepsilon)(x-x_2-\varepsilon)}{(x-x_1)(x-x_2)} q_n^*(x) \]
	would be a monic polynomial of the same degree with a strictly
	smaller weighted norm $\| w q_n \|_{\Sigma_n} <
		\| wq_n^* \|_{\Sigma_n}$, provided $\varepsilon > 0$
	is small enough, which would contradict part (b). 
	
	From \eqref{En} and \eqref{Sigman} it is easily checked that $(-1-x_n^*)^{-1}$ and $(1-x_n^*)^{-1}$ 
	are consecutive points in $\Sigma_n$. Thus at most one zero of $q_n^*$
	is in the closed interval between those points. This zero (if it exists) corresponds 
	by  \eqref{zeroqtop}  to a zero of $p_n^*$ in $(-\infty,-1] \cup [1,\infty)$, 
	or to $\infty$ in case $0$ is a zero of $q_n^*$. The other zeros of $q_n^*$
	correspond to zeros of $p_n^*$ in $(-1,1)$, and thus $p_n^*$
	has at least $\lfloor \alpha n \rfloor - 1$ zeros in $(-1,1)$.
	
	\medskip
	(e) 
	The zeros of $q_n^*$  belong to the open interval $(\min \Sigma_n, \max \Sigma_n)$. 
	Indeed, if $x_0$ is a zero of $q_n^*$ with $x_0 \geq \max \Sigma_n$,
	then $q(x) =  \frac{x-x_0+\varepsilon}{x-x_0}   q_n^*(x_0)$ has a smaller weighted
	norm $\| wq \|_{\Sigma_n}$ than $q_n^*$, if $\varepsilon > 0$ is small enough,
	and we get a similar contradiction if $x_0 \leq \min \Sigma_n$.
	
	We already proved that the zeros of $q_n^*$ are separated by the points in $\Sigma_n$,
	and so we conclude that the zeros of $q_n^*$ are also separated by the points in $\Sigma_n$
	when the real line is considered as part of the Riemann sphere. The separation
	is then preserved under the inversion that is included in the mapping \eqref{zeroqtop}
	to the zeros of $p_n^*$. This implies that the zeros of $p_n^*$ are
	separated by the gridpoints of $E_n$ as claimed in part (e). 
\end{proof}

\subsection{The functional $J$}

Recall from \eqref{Imualpha} that $\mu_{\alpha}$ minimizes $I(\mu)$ among measures
$\mu \in \mathcal M_{\alpha,\sigma}$. 
We consider another functional
\begin{equation} \label{Jmudef} 
	J(\mu) = \min_{x \in \supp(\sigma-\mu)} U^{\mu}(x)
	- \min_{x \in [-1,1]} U^{\mu}(x) \end{equation}
on measures $\mu \in \mathcal M_{\alpha,\sigma}$.
Note that by Proposition \ref{prop24} we have
\begin{equation} \label{Jmualpha} 
	J(\mu_{\alpha}) = C(\alpha) > 0, \end{equation}
since $\supp(\sigma-\mu_{\alpha}) = [-r,r]$.

\begin{proposition} \label{prop32}
	We have
	\[ \limsup_{n \to \infty} \frac{1}{n} \log \frac{\| p_n^* \|_{[-1,1]}}{\| p_n^*\|_{E_n}}
			\leq \sup_{\mu \in \mathcal M_{\alpha,\sigma}} J(\mu). 
		\]
\end{proposition}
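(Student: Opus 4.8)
The plan is to pass to a subsequence along which the $\limsup$ is attained, extract a weak$^*$ limit $\mu$ of the zero counting measures coming from $p_n^*$, verify that $\mu\in\mathcal M_{\alpha,\sigma}$, and bound the ratio by $J(\mu)$ with the help of Lemmas~\ref{lemma21}(a) and~\ref{lemma22}. Normalize $p_n^*$ as in \eqref{pnstarnorm}, so that $\|p_n^*\|_{[-1,1]}=1=p_n^*(x_n^*)$ with $x_n^*\in(-1,1)$. By Lemma~\ref{lemma31} the zeros of $p_n^*$ are real and simple, at least $\lfloor\alpha n\rfloor-1$ of them lie in $(-1,1)$ separated there by $E_n$, and at most one zero lies outside $[-1,1]$. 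Let $Q_n$ be the monic polynomial whose zero set is the set of zeros of $p_n^*$ lying in $[-1,1]$; then $p_n^*=c_n(x-\beta_n)^{\varepsilon_n}Q_n$ for some $c_n\neq0$, $\varepsilon_n\in\{0,1\}$, and, when $\varepsilon_n=1$, some $\beta_n\in\mathbb R$ with $|\beta_n|>1$. Thus $Q_n$ is monic with $\deg Q_n\in\{\lfloor\alpha n\rfloor-1,\lfloor\alpha n\rfloor\}$, its zeros are real, simple, contained in $[-1,1]$, and separated by $E_n$ (two consecutive zeros of $Q_n$ are consecutive zeros of $p_n^*$, since $\beta_n\notin(-1,1)$).

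Let $\nu_n=\tfrac1n\sum_{Q_n(x)=0}\delta_x$, a positive measure on $[-1,1]$ of mass $\deg Q_n/n\to\alpha$. After passing to a subsequence $\mathcal N\subset\mathbb N$ I may assume that $\tfrac1n\log\frac{\|p_n^*\|_{[-1,1]}}{\|p_n^*\|_{E_n}}\to L$, where $L$ denotes the $\limsup$ in the proposition, that $\nu_n\stackrel{\ast}{\to}\mu$ for some positive measure $\mu$ on $[-1,1]$ (by weak$^*$ compactness), and that $\varepsilon_n$ is constant along $\mathcal N$. Then $\int d\mu=\alpha$, and Lemma~\ref{lemma21}(a) applied to $(Q_n)_{n\in\mathcal N}$ gives $\mu\leq\sigma$, so that $\mu\in\mathcal M_{\alpha,\sigma}$, and also $\liminf_{n\in\mathcal N}\tfrac1n\log\|Q_n\|_{E_n}\geq-\min_{x\in\supp(\sigma-\mu)}U^\mu(x)$. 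Since the zeros of all $Q_n$ lie in the fixed compact set $[-1,1]$, Lemma~\ref{lemma22} applies and gives $\lim_{n\in\mathcal N}\tfrac1n\log\|Q_n\|_{[-1,1]}=-\min_{x\in[-1,1]}U^\mu(x)$. Combining the last two statements,
\[ \limsup_{n\in\mathcal N}\tfrac1n\log\frac{\|Q_n\|_{[-1,1]}}{\|Q_n\|_{E_n}} \;\leq\; \min_{x\in\supp(\sigma-\mu)}U^\mu(x)-\min_{x\in[-1,1]}U^\mu(x) \;=\;J(\mu)\;\leq\;\sup_{\mu\in\mathcal M_{\alpha,\sigma}}J(\mu), \]
with $J$ as in \eqref{Jmudef}.

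It remains to transfer this bound from $Q_n$ to $p_n^*$. Since $|\beta_n|>1$, the reverse triangle inequality gives $|\beta_n|-1\leq|x-\beta_n|\leq|\beta_n|+1$ for every $x\in[-1,1]\supseteq E_n$; inserting the extreme factors into $p_n^*=c_n(x-\beta_n)^{\varepsilon_n}Q_n$ (the constant $c_n$ cancels in the ratio) yields
\[ \left|\,\tfrac1n\log\frac{\|p_n^*\|_{[-1,1]}}{\|p_n^*\|_{E_n}}-\tfrac1n\log\frac{\|Q_n\|_{[-1,1]}}{\|Q_n\|_{E_n}}\,\right| \;\leq\; \frac{\varepsilon_n}{n}\log\frac{|\beta_n|+1}{|\beta_n|-1}. \]
This vanishes when $\varepsilon_n=0$, and when $\varepsilon_n=1$ it tends to $0$ provided $\beta_n$ stays bounded away from $\pm1$ (it is automatically $o(1)$ when $|\beta_n|\to\infty$). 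So the only point left to settle---and the one I expect to be the main obstacle---is that the single zero of $p_n^*$ possibly lying outside $[-1,1]$ cannot approach an endpoint $\pm1$ exponentially fast. I plan to deduce this from the extremality of $p_n^*$, i.e.\ from Lemma~\ref{lemma31}(a): a zero just outside $[-1,1]$ very close to, say, $1$ makes $|p_n^*|$ small only at the single gridpoint $1$ and is otherwise wasted, so replacing that linear factor by a more effective one---and rescaling so that the value at $x_n^*$ remains $1$---would strictly decrease $\|p_n^*\|_{E_n}$, contradicting the extremal property \eqref{pnstar}. Once $\beta_n$ is under control, combining the two displays with $\tfrac1n\log\frac{\|p_n^*\|_{[-1,1]}}{\|p_n^*\|_{E_n}}\to L$ gives $L\leq\sup_{\mu\in\mathcal M_{\alpha,\sigma}}J(\mu)$, which is the assertion.
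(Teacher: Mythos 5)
Your argument is correct in overall structure, but it has a genuine gap which you yourself flagged and did not close: after stripping the exceptional zero $\beta_n$, the transfer from $Q_n$ back to $p_n^*$ costs a factor $\frac{|\beta_n|+1}{|\beta_n|-1}$, and $\frac{1}{n}\log\frac{|\beta_n|+1}{|\beta_n|-1}$ need not tend to $0$. Indeed if $|\beta_n|-1$ decays like $e^{-cn}$ this contributes $c>0$ in the limit. The sketched extremality argument ruling this out is left as a plan, and it is not clear it can be executed as described: a zero just outside $\pm1$ also suppresses $|p_n^*|$ on the part of $[-1,1]$ near that endpoint, so its effect on the numerator $\|p_n^*\|_{[-1,1]}$ is not controlled, and the claim that replacing the factor strictly decreases $\|p\|_{E_n}$ while keeping $p(x_n^*)=1$ would need a real argument.

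The paper avoids needing any such estimate by choosing a different cutoff: it removes the exceptional zero only when it lies outside $[-2,2]$, see \eqref{pnhat1}--\eqref{pnhat2}. Then $\frac{|x_0|+1}{|x_0|-1}<3$ uniformly, so the transfer costs at most a factor $3$ and $\frac{1}{n}\log 3\to 0$ with no further work. When the zero lies in $[-2,-1)\cup(1,2]$ it is simply kept in $\widehat p_n$: Lemma~\ref{lemma22} only requires the zeros to lie in a fixed compact set, and $[-2,2]$ suffices; Lemma~\ref{lemma21}(a) only requires the zeros to be separated by $E_n$, which still holds since they form a subset of the zeros of $p_n^*$; and the single possible zero in $[-2,-1)\cup(1,2]$ carries mass $\frac{1}{n}\to 0$, so the weak$^*$ limit is still supported on $[-1,1]$ with total mass $\alpha$, hence in $\mathcal M_{\alpha,\sigma}$. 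If you replace your cutoff $|\beta_n|>1$ by $|\beta_n|>2$ and keep the rest of your argument, the gap closes and your proof coincides with the paper's.
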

\begin{proof}
We start by taking a subsequence $\mathcal N \subset \mathbb N$ such that
	\begin{equation} \label{pnstarratio1}
		\lim_{\mathcal N \ni n \to \infty}
	\frac{1}{n} \log \frac{\| p_n^*\|_{[-1,1]}}{\|p_n^*\|_{E_n}}
	= \limsup_{n \to \infty} 	\frac{1}{n} \log 
	\frac{\| p_n^* \|_{[-1,1]}}{\|p_n^* \|_{E_n}}. \end{equation}

The zeros of $p_n^*$ may not be uniformly bounded. However, by Lemma \ref{lemma31}(c), there is at most one zero outside $[-2,2]$. If there is such
a zero, say $x_0$, then we set
\begin{equation} \label{pnhat1} 
		\widehat{p}_n(x) =  \kappa_n^{-1} \frac{p_n^*(x)}{x-x_0}, 
		\end{equation}
where $\kappa_n$ is the leading coefficient of $p_n^*$. Otherwise we set
\begin{equation} \label{pnhat2} 
	\widehat{p}_n(x) = \kappa_n^{-1} p_n^*(x). \end{equation}
Then $\widehat{p}_n$ is a monic polynomial of degree 
$\lfloor \alpha n \rfloor$ or $\lfloor \alpha n \rfloor - 1$.
In case \eqref{pnhat2} we clearly have
\begin{equation} \label{pnhatratio1}
 \frac{ \| p_n^*\|_{[-1,1]}}{\| p_n^*\|_{E_n}}
		= 	\frac{ \| \widehat{p}_n \|_{[-1,1]}}{\| \widehat{p}_n\|_{E_n}},
 \end{equation}
 while in case \eqref{pnhat1} we can claim that
\begin{equation} \label{pnhatratio2}  \frac{1}{3} \frac{\| p_n^*\|_{[-1,1]}}{\|p_n^*\|_{E_n}} \leq \frac{\| \widehat{p}_n\|_{[-1,1]}}{\|\widehat{p}_n\|_{E_n}} \leq  3 \frac{\| p_n^*\|_{[-1,1]}}{\|p_n^*\|_{E_n}}. \end{equation}
To obtain \eqref{pnhatratio2} we note that since $|x_0| > 2$
we have $|x_0|-1 \leq |x-x_0| \leq |x_0|+1$ for $x \in [-1,1]$,
so that
\[ |\kappa_n^{-1}| \frac{|p_n^*(x)|}{|x_0|+1} \leq   | \widehat{p}_n(x)| \leq 	|\kappa_n^{-1}| \frac{|p_n^*(x)|}{|x_0|-1}, \qquad \text{for } x \in [-1,1]. \]
Taking the supremum over $x \in [-1,1]$ and over $x\in E_n$, we obtain 
\begin{align*} 
		\frac{|\kappa_n^{-1}|}{|x_0|+1} \|p_n^*\|_{[-1,1]} \leq   \| \widehat{p}_n\|_{[-1,1]} \leq  \frac{|\kappa_n^{-1}|}{|x_0|-1} \|p_n^*\|_{[-1,1]},  \\
		\frac{|\kappa_n^{-1}|}{|x_0|+1}   
		\|p_n^*\|_{E_n} \leq   \| \widehat{p}_n\|_{E_n} \leq 	
			\frac{|\kappa_n^{-1}|}{|x_0|-1} \|p_n^*\|_{E_n}.  \end{align*}
Taking ratios of these inequalities leads to \eqref{pnhatratio2} since $\frac{|x_0|+1}{|x_0|-1} <3$ for $|x_0| > 2$.

From \eqref{pnstarratio1} \eqref{pnhatratio1}, \eqref{pnhatratio2} it then follows that
	\begin{equation} \label{pnstarratio2}
 \limsup_{n \to \infty} \frac{1}{n} \frac{\|p_n^*\|_{[-1,1]}}{\|p_n^*\|_{E_n}}
 	= \lim_{\mathcal N \ni n \to \infty}
	\frac{1}{n} \log \frac{\| \widehat{p}_n\|_{[-1,1]}}{\|\widehat{p}_n\|_{E_n}}.
	\end{equation}

Next,  by taking a further subsequence if necessary, we may also
assume that the sequence $(\nu_n)_n$ of normalized zero counting measures, i.e.,
\[ \nu_n = \frac{1}{n} \sum_{x : \widehat{p}_n(x) = 0} \delta_x \]
converges in the weak$^*$ sense as $n \to \infty$ with $n \in \mathcal N$.
Here we use Helly's selection theorem, and Lemma \ref{lemma31} (c).
By Lemma \ref{lemma31} (d) the weak$^*$ limit, 	say $\mu$, belongs to
$\mathcal M_{\alpha, \sigma}$. 

Now we apply Lemmas \ref{lemma21} and \ref{lemma22} to the polynomials $\widehat{p}_n$.
%as we note that these lemmas also hold for  convergence along a subsequence. 
From part (a) of Lemma~\ref{lemma21}  we get
\[ \liminf_{\mathcal N \ni n \to \infty}
	\frac{1}{n} \log \| \widehat{p}_n\|_{E_n} \geq
	- \min_{x \in \supp(\sigma-\mu)} U^{\mu}(x) \]
and from Lemma \ref{lemma22}
\[ \lim_{\mathcal N \ni n \to \infty}
\frac{1}{n} \log \| \widehat{p}_n\|_{[-1,1]} =
- \min_{x \in [-1,1]} U^{\mu}(x). \]
These limits and \eqref{pnstarratio2} then imply that
\[ \limsup_{ n \to \infty}
\frac{1}{n} \log \frac{\| p_n^*\|_{[-1,1]}}{\|p_n^*\|_{E_n}}
	\leq J(\mu) \]
and the proposition since $\mu \in \mathcal M_{\alpha,\sigma}$.
\end{proof}

\subsection{Conclusion of the proof of the upper bound}

In view of Proposition \ref{prop32} it remains to show
that $\sup\limits_{\mu \in \mathcal M_{\alpha,\sigma}} J(\mu) = C(\alpha)$
in order to obtain \eqref{upperbound}. This is the final result of the paper.  

\begin{proposition} \label{prop33} For every $\mu \in \mathcal M_{\alpha,\sigma}$
	with $\mu \neq \mu_{\alpha}$ we have
\begin{equation} \label{Jextremal} 
	J(\mu)  < J(\mu_{\alpha}) = C(\alpha). 
\end{equation}
\end{proposition}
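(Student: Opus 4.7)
Since Proposition~\ref{prop24} gives $J(\mu_\alpha)=C(\alpha)$, it suffices to prove the strict inequality $J(\mu)<C(\alpha)$ for every $\mu\in\mathcal M_{\alpha,\sigma}$ with $\mu\neq\mu_\alpha$. The potential $U^\mu$ is continuous by Remark~\ref{remark22} since $\mu\le\sigma$, so the minimum $\min_{[-1,1]}U^\mu$ is attained at some $x^{*}\in[-1,1]$. If $x^{*}\in\supp(\sigma-\mu)$, then $\min_{\supp(\sigma-\mu)}U^\mu\le U^\mu(x^{*})=\min_{[-1,1]}U^\mu$, whence $J(\mu)\le 0<C(\alpha)$ trivially. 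So the substantive case is the one in which $x^{*}$ lies in the open saturated set $\{x\in[-1,1]:\mu=\sigma\text{ in a neighborhood of }x\}$.

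In that case I would compare $U^\mu$ with $U^{\mu_\alpha}$ through the signed measure $\nu:=\mu-\mu_\alpha$. Since $0\le\mu\le\sigma$ while $\mu_\alpha=\sigma$ on $[-1,-r]\cup[r,1]$, we have $\nu\le 0$ on $[-1,-r]\cup[r,1]$; hence the positive part $\nu_{+}$ is supported in $[-r,r]$, and $\int d\nu=0$. For any $y\in\supp(\sigma-\mu)$,
\[
J(\mu)\le U^\mu(y)-U^\mu(x^{*})=\bigl[U^{\mu_\alpha}(y)-U^{\mu_\alpha}(x^{*})\bigr]+\bigl[U^\nu(y)-U^\nu(x^{*})\bigr].
\]
The equilibrium conditions \eqref{Umualpha} give $U^{\mu_\alpha}(y)\le\ell_\alpha$, with equality precisely for $y\in[-r,r]$, while the analysis in the proof of Proposition~\ref{prop24} yields $U^{\mu_\alpha}(x^{*})\ge U^{\mu_\alpha}(\pm 1)=\ell_\alpha-C(\alpha)$. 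So the first bracket is bounded by $C(\alpha)$, with equality only when $y\in[-r,r]$ and $x^{*}\in\{-1,1\}$.

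The plan is then to choose $y\in\supp(\sigma-\mu)\cap[-r,r]$ (after first showing this intersection is non-empty, handling the degenerate subcase where $\mu=\sigma$ throughout $[-r,r]$ separately via a direct comparison using the strict inequality $U^{\mu_\alpha}<\ell_\alpha$ on $[-1,-r)\cup(r,1]$) and to show that the Riesz-type integral
\[
U^\nu(y)-U^\nu(x^{*})=\int\log\frac{|x^{*}-t|}{|y-t|}\,d\nu(t)
\]
is non-positive, with strict inequality unless $\nu\equiv 0$. The plan for this is to split via the Hahn decomposition $\nu=\nu_{+}-\nu_{-}$, exploit the support restriction $\supp\nu_{+}\subseteq[-r,r]$, and combine with the variational inequality $\int U^{\mu_\alpha}\,d\nu\ge 0$ (itself a consequence of $\int d\nu=0$ and \eqref{Umualpha}). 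The main obstacle is precisely this last step: the sign of the kernel $\log\frac{|x^{*}-t|}{|y-t|}$ depends on the relative positions of $x^{*}$, $y$, and $t$, so the freedom to choose $y$ and exploit the specific location of $x^{*}$ within the saturation region must be used carefully. The strict inequality $J(\mu)<C(\alpha)$ for $\mu\neq\mu_\alpha$ will then follow from the uniqueness of $\mu_\alpha$ as the minimizer in \eqref{Imualpha}: equality throughout the chain would force $\nu$ to satisfy the equilibrium equations, hence $\nu=0$.
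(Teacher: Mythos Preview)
Your outline matches the paper's proof closely: the same trivial case when the minimizer $x^{*}$ of $U^{\mu}$ lies in $\supp(\sigma-\mu)$, and the same comparison through $\nu=\mu-\mu_{\alpha}$ in the main case. But there is a genuine gap at precisely the point you flag as ``the main obstacle'', and the route you propose for it (direct analysis of the kernel $\log\frac{|x^{*}-t|}{|y-t|}$ via the Hahn decomposition, plus a separate treatment when $\mu=\sigma$ on $[-r,r]$) is unnecessarily complicated and, as you yourself sense, not obviously going to close.

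The missing idea is the minimum principle for superharmonic functions. The relevant support observation is not that $\supp\nu_{+}\subseteq[-r,r]$, but rather that on $[-1,1]\setminus\supp(\sigma-\mu)$ one has $\mu=\sigma\ge\mu_{\alpha}$, hence $\nu\ge 0$ there; equivalently, $\supp\nu_{-}\subseteq\supp(\sigma-\mu)$. This makes $U^{\nu}$ superharmonic on $\mathbb{C}\setminus\supp(\sigma-\mu)$, continuous on $\mathbb{C}$, and (since $\int d\nu=0$) harmonically extendable across $\infty$ on the Riemann sphere. The minimum principle then forces the global minimum of $U^{\nu}$ to be attained on $\supp(\sigma-\mu)$ only, so
\[
U^{\nu}(x^{*})>\min_{y\in\supp(\sigma-\mu)}U^{\nu}(y)
\]
strictly (the function is non-constant because $\mu\neq\mu_{\alpha}$). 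Combining this with the trivial bound $\min_{\supp(\sigma-\mu)}U^{\nu}\ge\min_{\supp(\sigma-\mu)}U^{\mu}-\max_{[-1,1]}U^{\mu_{\alpha}}$ and the equilibrium identity $\max_{[-1,1]}U^{\mu_{\alpha}}=\ell_{\alpha}=\min_{\supp(\sigma-\mu_{\alpha})}U^{\mu_{\alpha}}$ from \eqref{Umualpha} gives $J(\mu)<J(\mu_{\alpha})$ in one line. No choice of a particular $y\in[-r,r]$, no degenerate subcase, and no appeal to the uniqueness of the energy minimizer \eqref{Imualpha} are needed.
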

\begin{proof} We already noted in \eqref{Jmualpha} that $J(\mu_{\alpha}) = C(\alpha)  > 0$.
	
Let $\mu \in \mathcal M_{\alpha,\sigma}$ with $\mu \neq \mu_{\alpha}$.
Take $x_0 \in [-1,1]$ with 
\begin{equation} \label{minprinc1} U^{\mu}(x_0) = \min_{x\in [-1,1]} U^{\mu}(x). 
	\end{equation}
If $x_0 \in \supp(\sigma-\mu)$, then the minimimu of  $U^{\mu}$ over
$\supp(\sigma-\mu)$ is also attained at $x_0$, and it would follow from \eqref{Jmudef} that $J(\mu) = 0$.
Then the strict inequality \eqref{Jextremal} holds.
Hence we may assume that $x_0 \in [-1,1] \setminus \supp(\sigma-\mu)$.

Since $\mu \leq \sigma$ and $\mu_{\alpha} \leq \sigma$ we see that
both $U^{\mu}$ and $U^{\mu_{\alpha}}$ are continuous functions on $\mathbb C$, 
see also Remark \ref{remark22}, and they are 
both harmonic in $\mathbb C \setminus [-1,1]$.
Also $\mu \geq  \mu_{\alpha}$ on $[-1,1] \setminus \supp(\sigma-\mu)$, 
and therefore 
$U^{\mu - \mu_{\alpha}}$ is superharmonic on $\mathbb C \setminus \supp(\sigma-\mu)$.
It has a finite limit at infinity since $\mu$ and $\mu_{\alpha}$ have the same total mass,
and therefore $U^{\mu-\mu_{\alpha}}$, when viewed as a function on the Riemann sphere, 
extends to a function that is harmonic at infinity,  cf.\ \cite[Corollary 5.2.3]{AG01}.
The minimum principle for superharmonic functions \cite{Helms09,ST97} 
then tells us that the minimum of $U^{\mu-\mu_{\alpha}}$ is taken
on $\supp(\sigma-\mu)$ only. In particular, since $x_0 \not\in \supp(\sigma-\mu)$
\begin{equation} \label{minprinc} 
	U^{\mu - \mu_{\alpha}}(x_0) >  
	\min_{x \in \supp(\sigma-\mu)} U^{\mu-\mu_{\alpha}}(x). 
	\end{equation}
Combining \eqref{minprinc} with the obvious inequality (since $\supp(\sigma-\mu) \subset [-1,1]$)
\begin{align*}
	\min_{x \in \supp(\sigma-\mu)} U^{\mu - \mu_{\alpha}}(x)
	& \geq \min_{x \in \supp(\sigma-\mu)} U^{\mu}(x)
	- \max_{x \in [-1,1]} U^{\mu_{\alpha}}(x),
\end{align*}
we obtain 
\begin{align*} U^{\mu}(x_0) - U^{\mu_{\alpha}}(x_0) >
	\min_{x \in \supp(\sigma-\mu)} U^{\mu}(x) -
		\max_{x \in [-1,1]} U^{\mu_{\alpha}}(x), \end{align*}
which leads to
\begin{align} \nonumber 
	\min_{x \in \supp(\sigma-\mu)} U^{\mu}(x) - U^{\mu}(x_0)
	& < \max_{x \in [-1,1]} U^{\mu_{\alpha}}(x) -  U^{\mu_{\alpha}}(x_0) \\
	& \leq \max_{x \in [-1,1]} U^{\mu_{\alpha}}(x) - 
		\min_{x \in [-1,1]} U^{\mu_{\alpha}}(x). \label{minprinc2}
	  \end{align}
The left-hand side of \eqref{minprinc2} is equal to $J(\mu)$ because
of \eqref{Jmudef} and \eqref{minprinc1}.
For the right-hand side, we note that by the special property \eqref{Umualpha} 
of $U^{\mu_{\alpha}}$   we have
  \[ \max_{x \in [-1,1]} U^{\mu_{\alpha}}(x) = \ell_{\alpha}
  	= \min_{x \in \supp(\sigma-\mu_{\alpha})} U^{\mu_{\alpha}}(x), \]
and therefore the right-hand side of \eqref{minprinc2} is equal to $J(\mu_{\alpha})$. Thus $J(\mu) < J(\mu_{\alpha})$ and the proposition is proved.
\end{proof}

\begin{remark}
According to Proposition \ref{prop33} the constrained equilibrium measure
$\mu_{\alpha}$ is the unique maximizer of $J(\mu)$ among measures $\mu \in \mathcal M_{\alpha,\sigma}$. 
We may conclude from this that the sequence of normalized zero counting
measures of the extremal polynomials $p_n^*$ tends to the constrained
equilibrium measure $\mu_{\alpha}$ as $n \to \infty$. 
This follows from  the proof of Proposition \ref{prop32}, combined with the proven 
fact that
\[ \lim_{n\to \infty}
	\frac{1}{n} \log \frac{\| p_n^*\|_{[-1,1]}}{\|p_n^*\|_{E_n}} = C(\alpha), \]
as this gives that the weak$^*$ limit of any convergent subsequence is a measure
$\mu \in \mathcal M_{\alpha, \sigma}$ with $J(\mu) = C(\alpha)$.
Because of \eqref{Jextremal} this limit has to be $\mu_{\alpha}$, and thus
by a compactness argument the full sequence tends to $\mu_{\alpha}$ indeed.
\end{remark}

\subsection*{Acknowledgement} I want to thank Daan Huybrechs and Nick Trefethen for
their interest in this work, for useful discussions,  and for 
stimulating me to write the details of the proof of Theorem \ref{theorem11}. 

The author  was
supported by the long term structural funding "Methusalem grant of the Flemish Government"
and by FWO Flanders projects EOS 30889451 and G.0910.20.

\Addresses

\end{document}